\crefname{subsection}{subsection}{subsections}
\newcommand{\leqnomode}{\tagsleft@true}
\newcommand{\reqnomode}{\tagsleft@false}
\DeclareFontFamily{U}{mathx}{\hyphenchar\font45}
\DeclareFontShape{U}{mathx}{m}{n}{<-> mathx10}{}
\DeclareSymbolFont{mathx}{U}{mathx}{m}{n}
\DeclareMathAccent{\widebar}{0}{mathx}{"73}
\DeclareMathSymbol{\omicron}{\mathord}{letters}{"6F}
\newtheorem{theorem}{Theorem}[section]
\newtheorem{remark}[theorem]{Remark}
\newtheorem{notation}[theorem]{Notation}
\newtheorem{lemma}[theorem]{Lemma}
\newtheorem{corollary}[theorem]{Corollary}
\newtheorem{proposition}[theorem]{Proposition}
\newtheorem{question}[theorem]{Quesion}
\newtheorem{fact}[theorem]{Fact}
\theoremstyle{definition}
\newtheorem{example}[theorem]{Example}
\newtheorem{definition}[theorem]{Definition}
\newcommand{\twopartdef}[4]
{
	\left\{
	\begin{array}{ll}
		#1 & \mbox{if } #2 \\
		#3 & \mbox{if } #4
	\end{array}
	\right.
}
\newcommand{\cM}{\mathcal{M}}
\newcommand{\cN}{\mathcal{N}}
\newcommand{\cL}{\mathcal{L}}
\newcommand{\cP}{\mathcal{P}}
\newcommand{\cI}{\mathcal{I}}
\newcommand{\cS}{\mathcal{S}}
\newcommand{\cA}{\mathcal{A}}
\newcommand{\cB}{\mathcal{B}}
\newcommand{\cD}{\mathcal{D}}
\newcommand{\bM}{\mathbb{M}}
\newcommand{\bN}{\mathbb{N}}
\newcommand{\bZ}{\mathbb{Z}}
\newcommand{\omegaLomega}{\omega^{<\omega}}
\newcommand{\omegaomega}{\omega^\omega}
\newcommand{\omegaomegastar}{\omega^{\omega^*}}
\newcommand{\vphi}{\varphi}
\newcommand{\scomp}[2]{#1\left[#2\right]}
\newcommand{\scompMNs}{\scomp{\cM}{\cN^s}}
\newcommand{\gcompMN}{\scomp{\cM}{\cN_a}_{a\in\cM}}
\newcommand{\gcompMNs}{\scomp{\cM}{\cN^s_a}_{a\in\cM}}
\newcommand{\into}{\hookrightarrow}
\DeclareMathOperator{\red}{red}
\DeclareMathOperator{\blue}{blue}
\DeclareMathOperator{\tp}{tp}
\DeclareMathOperator{\aut}{Aut}
\DeclareMathOperator{\Th}{Th}
\DeclareMathOperator{\height}{height}
\DeclareMathOperator{\leaf}{leaf}
\DeclareMathOperator{\branch}{branch}
\DeclareMathOperator{\suc}{succ}
\DeclareMathOperator{\In}{int}
\DeclareMathOperator{\rot}{root}
\newcommand{\famTree}{\Braket{T,\cM_t}}
\newcommand{\famTreeBig}{\left\langle T,<,\left( \cM_t \right)_{t\in \In(T)}\right\rangle}
\newcommand{\infProds}{\prod_T \cM^s_t }
\newcommand{\infProdNs}{\prod_T \cN^s_t }
\title{Infinite Lexicographic Products}
\author[Nadav Meir]{Nadav Meir}\thanks{The work in this paper is part of the author's Ph.D. studies at the Department of Mathematics, Ben-Gurion University of the Negev under the supervision of Assaf Hasson. \\ \indent The author was Partially supported by ISF Grant 181/60 and the Hillel Gauchman scholarship.}
\email{mein@math.bgu.ac.il}
\address{Department of Mathematics,
	Ben Gurion University of the Negev \\
	P.O.B. 653, 
	Be'er Sheva 8410501, Israel.
}
\begin{document}
\reqnomode
	\begin{abstract}	
		We generalize the lexicographic product of first-order structures by presenting a framework for constructions which, in a sense, mimic iterating the lexicographic product infinitely and not necessarily countably many times. We then define dense substructures in infinite products and show that any countable product of countable transitive homogeneous structures has a unique countable dense substructure, up to isomorphism. Furthermore, this dense substructure is transitive, homogeneous and elementarily embeds into the product. This result is then utilized to construct a rigid elementarily indivisible structure.		
	\end{abstract}
	\maketitle

\section{Introduction}

Much of mathematics in general deals with the construction of new mathematical structures using existing ones as building blocks. Examples of such constructions are pervasive throughout mathematics. In algebra, there are constructions such as direct products, wreath products, and tensor products. In topology, there is the product topology, a.k.a. the product space. In graph theory and combinatorics, there are numerous notions of a product of two given graphs, such as the Cartesian product, the tensor product, and the lexicographic product. Even in na\"{i}ve set theory, the Cartesian product of sets plays a primary role, and, in axiomatic set theory, the existence of a Cartesian product of infinitely many sets is a source of long standing debates.

The study of properties of the new structures relative to their building blocks is central in each of the mathematical branches mentioned above. Examples span from classification of subgroups of a direct product of groups, through classical results in topology regarding preservation of separation axioms under products, and lack thereof, to 
the calculation of the chromatic number of a product of two graphs by means of arithmetic on their chromatic numbers. Indeed, the author has yet to find a major branch of mathematics in which this is not the case.

In \cite{Me}, the author studied a construction of the same nature, in the context of first-order relational structures, as defined below. 
\begin{definition}\label{defGenProduct}
	Let $\cM$, $\{\cN_a\}_{a\in \cM}$ be structures in a relational language, $\cL$. Let $M, \{N_a\}_{a\in \cM}$ be their universes, respectively.
	The \emph{generalized lexicographic product} $\gcompMN$ is the $\cL$-structure whose universe is $\bigcup_{a\in M} \{a\}\times N_a$ where for every
	$n$-ary relation $R\in \cL$ we set $R^{\gcompMN}$ to be
	\begin{align*}
	& \Set{\big((a,b_1),\dots,(a,b_n)\big)\ |\  a\in M \text{\ \ \ and\ \ \ } \cN_a \models R(b_1,\dots, b_n)  } \ \  \cup \\ 
	& \Set{\big((a_1,b_1),\dots,(a_n,b_n)\big)\ |\ \begin{matrix} \bigvee_{1\leq i\neq j\leq n} a_i\neq a_j \text{\ \ \ \  and \ \ \ \ } \cM \models R(a_1,\dots, a_n) \end{matrix} }.
	\end{align*}
\end{definition}
\noindent In case $\cN=\cN_a$ for all $a\in \cM$, this is abbreviated by $\scomp{\cM}{\cN}$ and this construction generalizes the lexicographic order and the lexicographic product of graphs. More generally, in a binary language, $\cM[\cN]$ coincides with a classical construction denoted by the same notation. (e.g. \cite{Cherlin,Lachlan}.)

Let $\gcompMN^s$ be $\gcompMN$
expanded by a binary relation $s\notin \cL$ interpreted as 
$\Set{\big((a,b_1),(a,b_2)\big) | a\in \cM \text{\ \ and\ \ } b_1,b_2\in \cN_a}$.
Let $\cN_a^s$ is $\cN_a$ expanded by a binary relation $s$ interpreted as $\left(\cN_a\right)^2$ for all $a\in \cM$. Then
$\gcompMN^s = \gcompMNs$. For this reason, we identify the two constructions and denote both by $\gcompMNs$.

In \cite{Me}, the author proved several results demonstrating the good behavior of the lexicographic product with regards to elementary equivalence, elementary embeddings, quantifier elimination, etc. (See \Cref{section:lexicographic results} for a reveiw of some of these results.)
Consequenty, many model-theoretic properties such as simplicity, stability, NIP, etc. are preserved under lexicographic products.

The study of lexicographic products was motivated by several question regarding elementarily indivisible structures, as defined below.  A first-order relational structure is \emph{indivisible} if for every colouring of its universe in two colours, there is a monochromatic substructure isomorphic to it. Additionally, it is \emph{elementarily indivisible} if the monochromatic substructure can be chosen to be elementary. Indivisibility of relational first-order structures is a well studied notion
in Ramsey theory
(e.g., \cite{KoRo86}, \cite{ElSa91}, \cite{ElSa93}, and  \cite[Appendix A]{Fr00}). In \cite{HKO11} by Hasson, Kojman and Onshuus asked three question concerning elementarily indivisible structure, two of them answered in \cite{Me}. The motivation behind the research presented in this paper is the third and final question in \cite{HKO11}:
\begin{question}[\text{\cite[Question 6.7]{HKO11}}]\label{QFinal}
	Is there a rigid elementarily indivisible structure?
	
	\noindent \emph{Here, by \emph{rigid}, we mean a structure whose automorphism group is trivial.}
\end{question}

In this paper, we take the lexicographic product a step further, to infinity (and beyond), 
by presenting a framework for constructions which, in a sense, mimic iterating the lexicographic product infinitely and not necessarily countably many times. 

We then concentrate on the case of countably many iterations, in which we define the notion of a \emph{dense substructure}; we prove that for any countable product of countable transitive homogeneous structures has a unique countable dense substructure, up to isomorphism. Furthermore, this dense substructure is transitive, homogeneous. In addition, we show that such a dense substructure not only elementarily embeds into the product, but also there is an $\cL_{\omega_1,\omega}$-elementary embedding of it into the product.

As an application, we show that a dense substructure in a countable lexicographic products of countable homogeneous indivisible structures is elementarily indivisible and answer \Cref{QFinal}.

We conclude by defining a strengthening of the elementary indivisibility to $\cL_{\omega_1,\omega}$ and show that every $\cL_{\omega_1,\omega}$-elementarily indivisible structure is transitive.

\subsection*{Acknowledgements} The author is grateful to Assaf Hasson for presenting the question which motivated this paper, as well as for the fruitful discussions and the warm support along the way.

\section{Preliminaries}\label{sec:Prelims}

In this \namecref{sec:Prelims} we summarize some of the context for our results, including a few basic definitions from model theory, mainly concentrating on countable infinitary logic, i.e., $\cL_{\omega_1,\omega}$ and its relation to homogeneity, as well as several results from \cite{Me} that this paper generalizes.

Unless otherwise specified, we do not distinguish between a structure $\cM$ and its universe (or underlying set). Throughout this paper all languages are relational, so there is no distinction between subsets and substructures of a given structure. The notation for both is $B\subseteq \cM$. We denote the cardinality of a structure $\cM$ by $|\cM|$.

\subsection{Homogeneity and infinitary logic}

\begin{definition}
	If $\cM$ and $\cN$ are $\cL$-structures and $B \subseteq \cM$, then $f : B \to \cN$ is a
	\emph{partial isomorphism} if
	$\cM \models \vphi\left(\bar{b}\right) \iff \cN\models \vphi\left(f\left(\bar{b}\right)\right)$
	for all \emph{quantifier-free} (or equivalently, atomic) $\cL$-formulas $\vphi$ and all finite sequences $\bar{b}$ from B.
	
\end{definition}

\begin{definition}\label{defUltrahom}
	A structure $\cM$ is \emph{homogeneous} if whenever $A \subset \cM$ with $|A| < |\cM|$ and $f : A \to \cM$ is a partial
	isomorphism, there is an automorphism $\sigma\in \aut(\cM)$ such that $\sigma \upharpoonright A = f$.
\end{definition}

\begin{definition}
	An $\cL_{\omega_1,\omega}$-theory $T$ admits \emph{quantifier elimination} (QE) if for every $\cL_{\omega_1,\omega}$-formula $\phi$ there is a quantifier-free $\cL_{\omega_1,\omega}$-formula $\psi$ such that
	$ T\models \phi \leftrightarrow \psi$. An $\cL$-structure  $\cM$ admits $\cL_{\omega_1,\omega}$-QE if its $\cL_{\omega_1,\omega}$-theory admits QE.
\end{definition}

\begin{definition}
	Let $\cL$ be a first-order language and let $\bar{v}= v_1,\dots, v_n$.
	\begin{enumerate}
		\item An \emph{$\cL$-diagram} in variables $\bar{v}$ is a (perhaps partial) type $p$ consisting of only atomic and negated atomic $\cL$-formulas.
		\item An \emph{$\cL$-diagram} in variables $\bar{v}$ is \emph{complete} if for every $k$-ary $R\in \cL$ and every $1\leq i_1,\dots,i_k\leq n$, either $R(v_{i_1},\dots, v_{i_k})\in p$ or $\neg R(v_{i_1},\dots, v_{i_k})\in p$.
		\item An \emph{$\cL$-diagram} in variables $\bar{v}$ is \emph{$T$-consistent} for $T$, where $T$ is either an $\cL$-theory or an $\cL_{\omega_1,\omega}$-theory if
		$T\not\models \neg\exists \bar{v}\bigwedge_{\phi\in p} \phi(\bar{v})$.
	\end{enumerate}
\end{definition}

\begin{lemma}\label{countableStrNotmanydiagrams}
	Let $\cL$ be a first-order language. If $\cM$ is an $\cL$-structure of size $\kappa\geq \aleph_0$, then for any complete $\cL$-diagram $p$ in variables $\bar{v}$, there is some $\cL$-diagram $q$ of size $\kappa$ in variables $\bar{v}$ such that 
	$\bar{a}\models p \iff \bar{a}\models q$ for all $\bar{a}\in \cM$.
\end{lemma}
\begin{proof}
	Assume not. We construct, by induction, a sequence of pairwise distinct tuples $\Set{\bar{a}_\alpha | \alpha<\kappa^+}\subseteq \cM$  and formulas $\Set{\phi_\alpha | \alpha<\kappa^+}\subseteq p$ such that $\bar{a}_\beta\not\models \phi_\beta$ and $\bar{a}_\beta\models \phi_{\alpha}$ for all $\alpha<\beta<\kappa^+$. This will contradict $|\cM|=\kappa$.
	\begin{itemize}
		\item There is some $\phi_0\in p$ and $\bar{a}_0\in\cM$ such that $\bar{a}_0\not\models \phi_0$.
		\item Assume $\bar{a}_\alpha$ and $\phi_{\alpha}$ were defined for all $\alpha<\beta<\kappa^+$. Since $\beta<\kappa^+$, there is some $\phi_{\beta}\in p$ and $\bar{a}_\beta\in \cM$ such that $\bar{a}_\beta\models \phi_{\alpha}$ for all $\alpha<\beta<\kappa^+$ but $\bar{a}_\beta\not\models \phi_\beta$.
	\end{itemize}
	By the construction, $\bar{a}_\alpha\not\models \phi_{\alpha}$ and $\bar{a}_\beta\models \phi_{\alpha}$ for all $\alpha<\beta<\kappa^+$. so $\bar{a}_\alpha\neq \bar{a}_\beta$.
\end{proof}

\begin{lemma}\label{infinitaryDNF}
	Let $\cL$ be a first-order language, let $\cM$ be a countable $\cL$-structure and let  $\phi(\bar{v})$ be a quantifier-free $\cL_{\omega_1,\omega}$-formula.
	
	If $\phi$ is quantifier-free or $\cM$ is homogeneous, then $\phi$ has a \emph{disjunctive normal form}, i.e., a formula of the form  $\bigvee_{j\in J}\bigwedge_{i\in I_j} \theta_i(\bar{v})$ such that $J$ and $I_j$ are countable for all $j\in J$ and $\theta_i$ is atomic or negated atomic for all $i\in \bigcup_{j\in J} I_j$ and 
	\[ \cM\models \forall \bar{v} \left( \phi(\bar{v})\leftrightarrow  \bigvee_{j\in J}\bigwedge_{i\in I_j} \theta_i(\bar{v}) \right)_. \]
\end{lemma}

\begin{proof}
	Let $\Psi$ be the set of all complete $\cL$-diagrams realized in $\cM$. By \Cref{countableStrNotmanydiagrams}, for any $p\in\Psi$, there is some countable $q_p$ such that $\bar{a}\models p\iff \bar{a}\models q_p$ for all $\bar{a}\in \cM$. Let $\Psi':=\Set{q_p | p\in \Psi}$. Then for any $\bar{a}\in \cM$, there is some $q\in \Psi'$ such that $\bar{a}\models q$. 
	\begin{itemize}
		\item If $\phi$ is quantifier-free, then for any complete $\cL$-diagram $p$, either $p\vdash \phi$ or $p\vdash \neg \phi$. 
		\item If $\cM$ is homogeneous, then for any two tuples $\bar{a},\bar{b}$ satisfying the same complete $\cL$-diagram, there is a partial isomorphism $f:\bar{a}\to \bar{b}$, which in turn extends to an automorphism of $\cM$. So $\bar{a},\bar{b}$ satisfy the same $\cL_{\omega_1,\omega}$-formulas. 
	\end{itemize}
	In conclusion, if either $\phi$ is quantifier free or $\cM$ is homogeneous, then for any complete $\cL$-diagram $p\in \Psi$, we have that  either $\bar{a}\models p \implies \cM\models \phi(\bar{a})$ for all $\bar{a}\in \cM$, or $\bar{a}\models p \implies \cM\not\models \phi(\bar{a})$ for all $\bar{a}\in \cM$. 
	So for any $q\in \Psi'$ we have that either $\bar{a}\models q \implies \cM\models \phi(\bar{a})$ for all $\bar{a}\in \cM$, or $\bar{a}\models q \implies \cM\not\models \phi(\bar{a})$ for all $\bar{a}\in \cM$. Let
	\begin{align*}
	& \Psi_1: = \Set{q \in \Psi' | \bar{a}\models q \implies \cM\models \phi(\bar{a})\text{  for all }\bar{a}\in \cM}, \\
	& \Psi_2: = \Set{q \in \Psi' | \bar{a}\models q \implies \cM\not\models \phi(\bar{a})\text{  for all }\bar{a}\in \cM}.
	\end{align*}
	So $\Psi' = \Psi_1\cup\Psi_2$, therefore, 
	$\cM\models \phi(\bar{a})\iff \bigvee_{q\in \Psi_1 } \bar{a}\models q$
	for all $\bar{a}\in \cM$.
	Since $\cM$ is countable, so is $\Psi_1$ as a set of realized diagrams. Since every $q$ is a countable $\cL$-diagram, we can write  $\bar{a}\models q \iff \cM \models \bigwedge_{\theta\in q} \theta(\bar{a})$ for all $\bar{a}\in \cM$ and the right hand side is a countable disjunction of atomic and negated atomic formulas. In conclusion
	\[ \cM\models \forall \bar{v} \left( \phi(\bar{v})\leftrightarrow \bigvee_{q\in \Psi_1 } \bigwedge_{\theta\in q} \theta_i(\bar{v}) \right) \]
	and $\Psi_1, q$ are countable.
\end{proof}

\begin{lemma}\label{indivisibleHom}
	Let $\cM$ be a countable structure. Then $\cM$ is homogeneous if and only if $\cM$ admits $\cL_{\omega_1,\omega}$-QE, which, in turn, implies that every embedding between isomorphic copies of $\cM$ is elementary.
\end{lemma}

\begin{proof}
		$\Rightarrow$ is by \Cref{infinitaryDNF}.
		
		For $\Leftarrow$: Assume $\cM$ admits $\cL_{\omega_1,\omega}$-QE, let $f:\bar{a}\to \bar{b}$ be a finite partial isomorphism and $c\in \cM$. We want to find some $d\in \cM$ such that $f\cup \Braket{c,d}$ is a partial isomorphism.
		Let $p(\bar{v},x)$ be the complete $\cL$-diagram realized by $\bar{a},c$. By \Cref{countableStrNotmanydiagrams}, there is some countable $\cL$-diagram $q$ equivalent to $p$ in $\cM$. It suffices to show that $\cM\models \exists x\,\bigwedge_{\theta\in q}\theta(\bar{b},x)$. Indeed, $\cM\models \exists x\,\bigwedge_{\theta\in q}\theta(\bar{a},x)$ and by $\cL_{\omega_1,\omega}$-QE, there is some quantifier-free $\cL_{\omega_1,\omega}$-formula $\vphi(\bar{v})$ such that $\cM\models \forall \bar{v}\left( \exists x\,\bigwedge_{\theta\in q}\theta(\bar{v},x)\leftrightarrow\vphi(\bar{v}) \right)$. So $\cM\models \vphi(\bar{a})$ and, since $\vphi$ is quantifier free, $\cM\models \vphi(\bar{b})$ so $\cM\models \exists x\,\bigwedge_{\theta\in q}\theta(\bar{b},x)$.
\end{proof}

\subsection{lexicographic products}\label{section:lexicographic results}

\begin{fact}[\text{\cite[Theorem 2.7]{Me}}]\label{compositionQE}
	Let $\cL$ be a relational language, let $s\notin \cL$ be a binary relation symbol and let $T_1, T_2$ be $\cL$-theories (not necessarily complete). 
	If  $T_1$ and $T_2$ both admit QE and $T_1$ has a transitive model then
	there is an $\cL \cup \{s\}$-theory $T$ (not necessarily complete) admitting QE, such that $\gcompMN^s \models T$ whenever $\cM \models T_1$ and $\{\cN_a\}_{a\in \cM} \models T_2$. 
	
	In particular, if $\cM$ and $\cN$ are $\cL$-structures both admitting QE and $\cM$ is transitive
	then $\cM[\cN]^s$ admits QE.
\end{fact}

\begin{fact}[\text{\cite[Proposition 2.21]{Me}}]\label{finProdIndivisible}
	If $\cM$ and $\cN$ are both indivisible, so is $\cM[\cN]^s$.
\end{fact}

\begin{proposition}\label{FinProdElemEmbed}
	Let $\cM, \{\cN_a\}_{a\in \cM};\ \cM', \{\cN'_a\}_{a\in \cM'}$ be structures in a relational language, $\cL$, such that $\Th(\cM)$ has transitive models. If $\cM\prec \cM'$ and $\cN_a\prec \cN'_a$ for all $a\in \cM$ then $\gcompMN^s \prec \cM'[\cN'_a]^s_{a\in \cM'.}$
\end{proposition}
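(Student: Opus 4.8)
The plan is to reduce to a setting where all the relevant theories eliminate quantifiers and then invoke the standard fact that any embedding between models of a theory with QE is elementary. First I would record that $\gcompMN^s$ is a substructure of $\cM'[\cN'_a]^s_{a\in\cM'}$. Indeed, $\cM\prec\cM'$ gives $M\subseteq M'$ and $\cN_a\prec\cN'_a$ gives $N_a\subseteq N'_a$, so the universe $\bigcup_{a\in M}\{a\}\times N_a$ is contained in $\bigcup_{a\in M'}\{a\}\times N'_a$. By \Cref{defGenProduct} each relation $R\in\cL$ and the relation $s$ are defined by the same clauses in both products, and those clauses refer only to equalities and $\cM$-relations among first coordinates (which agree since $\cM\subseteq\cM'$) and to fibre relations $\cN_a\models R(\bar b)$ (which agree since $\cN_a\subseteq\cN'_a$); hence the inclusion is an $\cL\cup\{s\}$-embedding.

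Next I would pass to Morleyizations, as in \Cref{TMI}. Let $\widehat{\cM},\widehat{\cM'}$ and $\widehat{\cN_a},\widehat{\cN'_a}$ denote them. Since the adjoined predicates are interpreted by their $\emptyset$-defining formulas, $\cM\prec\cM'$ upgrades to $\widehat{\cM}\prec\widehat{\cM'}$, and likewise $\widehat{\cN_a}\prec\widehat{\cN'_a}$ for each $a\in M$. Moreover Morleyization does not alter the space of $1$-types, so $\theory(\widehat{\cM})$ remains transitive in the sense of \Cref{def:transitiveTheory}, and all the Morleyized theories admit QE. As the fibres share a common theory, their Morleyization gives a single $Th_2$ admitting QE with $\widehat{\cN_a},\widehat{\cN'_a}\models Th_2$. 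I then form the $\widehat{\cL}\cup\{s\}$-products $\widehat{\cM}[\widehat{\cN_a}]^s$ and $\widehat{\cM'}[\widehat{\cN'_a}]^s$, and apply \Cref{compositionQE} with $Th_1=\theory(\widehat{\cM})$ (transitive, QE) and $Th_2$: both products are models of one $\widehat{\cL}\cup\{s\}$-theory $Th$ admitting QE.

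To finish, the same monotonicity argument as in the first paragraph shows that $\widehat{\cM}[\widehat{\cN_a}]^s$ is a substructure of $\widehat{\cM'}[\widehat{\cN'_a}]^s$. Since $Th$ admits QE and both are models of $Th$, every embedding between them is elementary, so this inclusion gives $\widehat{\cM}[\widehat{\cN_a}]^s\prec\widehat{\cM'}[\widehat{\cN'_a}]^s$. Finally I would use that the product construction commutes with taking reducts: the $\cL\cup\{s\}$-reduct of $\widehat{\cM}[\widehat{\cN_a}]^s$ is exactly $\gcompMN^s$, and similarly for the primed product, because the $\cL$-diagram of a product depends only on the $\cL$-diagrams of its components. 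As a reduct of an elementary extension is again an elementary extension, we conclude $\gcompMN^s\prec\cM'[\cN'_a]^s_{a\in\cM'}$.

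The routine part is the substructure check and the verification that reducts of the Morleyized products recover the original ones. The delicate point, where care is needed, is the bookkeeping in the second paragraph: one must confirm that Morleyization \emph{simultaneously} preserves $\prec$ on every component and the transitivity of $\theory(\cM)$, and, most importantly, that the fibres can be placed under a single QE theory $Th_2$ — without this common fibre theory \Cref{compositionQE} does not apply, and the conclusion can genuinely fail (the new fibres indexed by $M'\setminus M$ could otherwise realize $1$-types absent from $\gcompMN^s$). I expect this common-theory reduction to be the main thing to get right; once it is in place, the QE-implies-elementary step makes the rest immediate.
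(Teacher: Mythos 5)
Your overall route is the same as the paper's: Morleyize, invoke \Cref{compositionQE} to place both products under a single $\widehat{\cL}\cup\{s\}$-theory admitting QE, use the fact that an embedding between models of a theory with QE is elementary, and pass back to the reduct. The substructure check and the reduct bookkeeping, which the paper compresses into the phrase ``the canonical embedding'', are fine and match what the paper implicitly does.

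The genuine gap is precisely the step you flag as the delicate one: ``as the fibres share a common theory, their Morleyization gives a single $Th_2$.'' Nothing in the hypotheses gives you this. The structures $\cN_a$ for distinct $a\in M$ may be pairwise non-elementarily equivalent, and the fibres $\cN'_a$ for $a\in M'\setminus M$ are completely unconstrained; the hypothesis $\cN_a\prec\cN'_a$ only ties each old fibre to its own primed copy, so there is no single complete fibre theory available, and the common theory of the old fibres need not be modeled by the new ones. The paper's proof needs no such assumption: it takes $Th_2$ to be $\widehat{Th}$, the theory axiomatizing Morleyizations (the scheme $\forall\bar{x}\,\big(R_\phi(\bar{x})\leftrightarrow\phi(\bar{x})\big)$), which admits QE and is modeled by \emph{every} Morleyization of an $\cL$-structure regardless of its complete theory, while $Th_1$ is the complete theory $\theory(\widehat{\cM})=\theory(\widehat{\cM'})$, which is transitive and admits QE. This is exactly why \Cref{compositionQE} is stated for theories that are ``not necessarily complete''. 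Note also that your closing parenthetical --- that with unconstrained new fibres ``the conclusion can genuinely fail'' --- amounts to asserting that the proposition itself is false, since the proposition places no hypothesis whatsoever on $\cN'_a$ for $a\in M'\setminus M$; a correct proof therefore cannot proceed by silently adding a common-fibre-theory hypothesis. On the paper's reading, the worry is dissolved by the uniform theory $\widehat{Th}$, and if you stand by your heuristic counterexample you are really questioning \Cref{compositionQE} for incomplete $Th_2$, not supplying a missing hypothesis of \Cref{FinProdElemEmbed}.
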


\begin{proof}
	Consider the Morleyzations $\widehat{\cM}, \{\widehat{\cN_a}\}_{a\in \cM};\ \widehat{\cM'}, \{\widehat{\cN'_a}\}_{a\in \cM'}$ as defined in  in  \cite[Notation 2.19]{Me}. By definition of the Morleyzation, there is an $\widehat{\cL}$-theory $\widehat{T}$ eliminating quantifiers, such that all Morleyzations of $\cL$-structures model $\widehat{T}$. Since $\cM\prec \cM'$ and $\cN_a\prec \cN'_a$ for all $a\in \cM$, it follows that $\widehat{\cM}\prec \widehat{\cM'}$ and $\widehat{\cN_a}\prec \widehat{\cN'_a}$ for all $a\in \cM$. By \Cref{compositionQE}, $\widehat{\cM'}[\widehat{\cN'_a}]_{a\in \widehat{\cM'}}^s$ and $\widehat{\cM}[\widehat{\cN_a}]_{a\in \widehat{\cM}}^s$ both model an $\widehat{\cL}\cup\{s\}$-theory which eliminates quantifiers, so the canonical embedding $\widehat{\cM}[\widehat{\cN_a}]_{a\in \widehat{\cM}}^s \hookrightarrow \widehat{\cM'}[\widehat{\cN'_a}]_{a\in \widehat{\cM'}}^s$ is elementary.
\end{proof}

\begin{proposition}\label{finProdHomogeneous}\ 
	\begin{enumerate}
		\item 	If $\cM$ and $\cN$ are transitive, then $\scompMNs$ is transitive.
		\item	If $\cM$ and $\cN$ are $\kappa$-homogeneous for some cardinal $\kappa$, then $\scompMNs$ is $\kappa$-homogeneous.
	\end{enumerate}
\end{proposition}
\begin{proof}
	\begin{enumerate}
		\item Let $a,b\in \cM,c,d\in \cN$ and $f\in \aut(\cM),g\in \aut(\cN)$ such that $f(a)=b, g(c)=d$. Then, for $F\in \aut\left(\scompMNs\right)$ defined by $F((x,y)) := \left(f(x),g(y)\right)$, we have $F((a,b)) = (c,d)$. 
		\item 
		Let $\lambda<\kappa$ and let $\Braket{(a_i,b_i) | i<\lambda+1}$,$\Braket{(c_i,d_i) | i<\lambda}$ be sequences of elements in $\scompMNs$ such that \[\tp_{\scompMNs}^{qf}\left(\Braket{(a_i,b_i) | i<\lambda}\right) =\tp_{\scompMNs}^{qf}\left(\Braket{(c_i,d_i) | i<\lambda}\right).\]
		We need to find some $(c_{\lambda+1},d_{\lambda+1})$ such that  
		\[\tp_{\scompMNs}^{qf}\left(\Braket{(a_i,b_i) | i<\lambda+1}\right) =\tp_{\scompMNs}^{qf}\left(\Braket{(c_i,d_i) | i<\lambda+1}\right).\]
		By $\kappa$-homogeneity of $\cM$ and $\cN$ there are $c_{\lambda+1}\in \cM$ and $d_{\lambda+1}\in \cN$ such that 
		$\tp_{\cM}^{qf}\left(\Braket{a_i | i<\lambda+1}\right) =\tp_{\cM}^{qf}\left(\Braket{c_i | i<\lambda+1}\right)$
		 and \\ 
		$\tp_{\cN}^{qf}\left(\Braket{b_i | i<\lambda+1}\right) =\tp_{\cN}^{qf}\left(\Braket{d_i | i<\lambda+1}\right)$.
		By definition of $\scompMNs$, we are done.
		\end{enumerate}
\end{proof}

\newcommand{\meet}{\land}

\section{Finite tree products}\label{sec:infitinite products}

We can iterate the product defined in \Cref{defGenProduct} any finite number of times, and this product is, in fact, \emph{associative}: using the bijection $(a,(b,c))\mapsto ((a,b),c)$, we get
$\cM\left[\cN\left[\cP\right]^{s_2}\right]^{s_1} \cong \left(\cM\left[\cN\right]^{s_1}\right)\left[\cP\right]^{s_2}$ and 
\[\cM\left[\cN_a\left[\cP_b\right]_{b\in \cN_a}^{s_2}\right]^{s_1}_{a\in \cM}  \cong \left(\cM\left[\cN_a\right]_{a\in M}^{s_1}\right)\left[\cP_b\right]_{{b\in \cM\left[\cN_a\right]}^{s_2}_{a\in \cM}.}^{s_1}\]

If $\cI$ is a structure whose universe is a singleton and $\cI, \cM\models \forall x \neg R(x,\dots,x)$ for all $R\in \cL$, then 
$ \cM[\cI] \cong \cI[\cM] \cong \cM. $

Next, we demonstrate how any finitely iterated product as above is equivalent to a product induced by a tree of finite height.
Consider the example of $\cM\left[\cN_a\left[\cP_b\right]_{b\in \cN_a}^{s_2}\right]^{s_1}_{a\in \cM}$. If we assume, for simplicity, that all structures in the product are structures on $\omega$ as their underlying set. Observe the tree illustrated below.\\

\includegraphics[width=\textwidth]{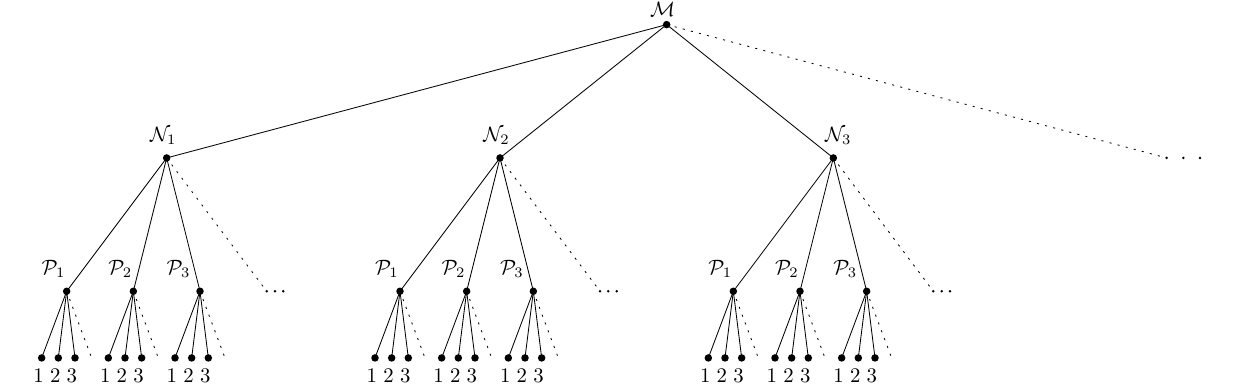}

In this tree, each internal (non-leaf) node of the tree is associated with a structure, $\cS(t)$. For such a node $t$, the set of immediate successors of $t$ are indexed by the universe of $\cS(t)$. Thus, to any node $t \in  T$ (except the root) is associated a unique element, $e(t)$, of the structure inhabiting its immediate predecessor.

For every $k$-tuple of leaves of the tree $(a_1,\dots, a_k)$ such that $\bigvee_{1\leq i< j\leq n} a_i\neq a_j$ we can find some node $m$ in the tree such that $m$ is the \emph{meet} of $a_1,\dots, a_k$, i.e. $m = a_1\meet \dots \meet a_k:= \max\Set{x | x\leq a_1,\dots, a_k}$. Notice that every chain in the tree is discretely-ordered, and thus, $m$ has an immediate successor in the segment $[m,a_i]:=\Set{x | m\leq x\leq a_i}$; call it $S_{a_i}(m)$.
So in the tree products, for every $k$-ary relation $R\in \cL$,
\[ R(a_1,\dots, a_k) \Leftrightarrow \cS(m)\models R(e(S_{a_1}(m)),\dots e(S_{a_k}(m))) \]
and we denote
$s_i(a,b) \Leftrightarrow \height(\meet(a,b))\geq i.$
Notice that the tree product described above is isomorphic to  $\cM\left[\cN_a\left[\cP_b\right]_{b\in \cN_a}^{s_2}\right]^{s_1}_{a\in \cM.}$

In the same sense as above, any finitely iterated product is isomorphic to a product induced by a tree of finite height, defined below.

\begin{definition}\label{defFinTreeProd}	
	 Let $\langle T,<\rangle$ be a tree of finite height, where:
	
	\begin{itemize}
		\item $\leaf(T)$ is the set of $<$-maximal elements in $T$.
		\item $\suc(t):=\Set{s\in T | t<s\ \land\, \not\exists x(t<x<s)}$ for $t\in T$.
		\item $\height(t)$ is order type of the set $\Set{s\in T | s<t}$.
		\item $\height(T):=\max_{t\in T}(\height(t))$.
	\end{itemize}
	
	If  $\left( \cM_t \right)_{t\in T\setminus\leaf(T)}$ is a family of structures in a relational language $\cL$ indexed by $T$, such that each $\cM_t$ is a structure whose universe is $\suc(t)$, then we define the tree product $\prod_T \cM_t$ to be the $\cL$-structure whose universe is $\leaf(T)$ where for every $k$-ary relation $R\in \cL$ we set $R^{\prod_T \cM_t}$ to be \[
	\Set{ (a_1,\dots, a_k) | \cM_m\models R(S_{a_1}(m),\dots, S_{a_k}(m)) \text{ where } m=a_1\land \dots\land a_k }. 
	\]
	
	 	If $\Braket{ s_\alpha | 1\leq \alpha < \height(T) }$ is a sequence of pairwise distinct binary relation symbols disjoint from $\cL$, let $\prod_T \cM^s_t$ be an expansion of $\prod_T \cM_t$ to\\ $\cL\, \cup\, \Set{ s_i |1\leq i<\height(T)}$, where $s_\alpha$ is interpreted as
	$\Set{(a,b) | \height(a\land b)\geq i  }$.

\end{definition}
\begin{remark}\label{oneIsIsoToSucc}
	Let $\cM$ be an $\cL$-structure such that $\cM\models \forall x\neg R(x,\dots, x)$ for every $R\in \cL$.
	Let $T:=\Set{r}\cup \cM$ such that $\cM=\suc(r)$ and let $\cM_r:=\cM$. Then $\cM\cong \cM[\cI] \cong \prod_{T}\cM_t$.
\end{remark}
	By finite induction, results from \cite{Me} such as \Cref{compositionQE} and \Cref{FinProdElemEmbed,finProdHomogeneous} easily extends to tree products where $\height(T)$ is finite. In the following \namecref{secInfTrees}, we generalize some of these results to the case where $T$ may be of infinite height.

\section{Infinite tree products}\label{secInfTrees}

In this \namecref{secInfTrees}, we rigorously defining an infinite iteration process as a product of a tree of structures, not necessarily of finite height.
In \Cref{subsecCountableProducts}, we concentrate on the case where the tree is countable; beforehand, we define and study some of the basic properties of a product induced by a \emph{successor meet tree} of any size or height, defined below.

\begin{definition}\label{def:succesorMeetTree}
	A \emph{successor meet tree} is a partially ordered set $\Braket{T, <}$, such that the following hold:
	\begin{enumerate}
		\item For all $t\in T$, the set $T_{<t}:=\Set{s\in T | s<t}$ is a chain.
		\item For every maximal chain $C\subseteq T$ and $a\in C$, if $a$ is not maximal, then $a$ has an immediate successor in $C$: there is some $s\in C$ such that $a<s$ and for all $s'\in C$, if $a<s'$ then $s\leq s'$.
		\\
		\noindent \emph{We denote the immediate successor $s$ of $a$ in $C$ by $S_C(a)$.
		}
		\item Every $a,b\in T$ have a \emph{meet} $m\in T$: there is some $m\leq a,b$ such that for all $m'\in T$, if $m'\leq a,b$ then $m'\leq m$.\\
		\noindent \emph{We denote the meet $m$ of $a$ and $b$ by $a\meet b$}.
	\end{enumerate}
\end{definition}

\begin{notation}
	Let $\langle T,<\rangle$ be a successor meet tree.
	
	\begin{itemize}
		\item $\branch(T)$ is the set of maximal $<$-chains.
		\item $\leaf(T)$ is the set of $<$-maximal elements in $T$.
		\item $\In(T) = T\setminus \leaf(T)$.
		\item $\suc(t):=\Set{s\in T | t<s\ \land\, \not\exists x(t<x<s)}$ for $t\in T$.
		\item $T_{<t}:= \Set{s\in T | s< t }$ for $t\in T$.
		\item $T_{<A}:= \Set{s\in T | \exists a\in A\, (s< a) } = \bigcup_{a\in A}T_{<a}$ for $A\subseteq T$.
		
		Similarly we define $T_{\leq t}$, $T_{>t}$, $T_{\geq t}$\ ;\ \ $T_{\leq A}$, $T_{>A}$, $T_{\geq A}$.
	\end{itemize}
\end{notation}

\begin{remark}
	If $T$ is a successor meet tree, then so is $T_{\geq t}$ for all $t\in T$, and if $A\subset T$ is a maximal anti-chain, then $T_{\leq A}$ is a successor meet tree as well.
\end{remark}

\begin{lemma}\label{meetBranches}
	Let $\braket{T,<}$ be a successor meet tree.
	\begin{enumerate}
		\item If $B\in \branch(T)$ and $b\in B$,  then $T_{<b}\subset B$.
		\item If $B,C\in \branch(T)$ such that $B\neq C$, then there is some $t\in \In(T)$ such that $B\cap C = T_{\leq t}$.  \emph{We denote such $t$ by $B\meet C$.}
		Moreover, if $b\in B\setminus C$ and $c\in C\setminus B$ then $B\meet C = b\meet c$.

	\end{enumerate}
\end{lemma}

	\begin{proof}
		\begin{enumerate}
			\item\label{BranchInitialSegment} Otherwise, by maximality of $B$, there is some $c\in B$ and $a\in T_{<b}$ such that $c\nleq a$ and $a\nleq c$. Therfore, $b\nleq c$, so $c\in T_{<b}$, contradicting $T_{<b}$ being a chain.
			
			\item By maximality, there are $b\in B\setminus C$ and $c\in C\setminus B$. We claim that $B\cap C = T_{\leq b\meet c}$.
			Indeed, It follows from \Cref{BranchInitialSegment} that $T_{\leq b\meet c}\subseteq B\cap C$. 
			To prove $T_{\leq b\meet c}\supseteq B\cap C$, if there is some $a\in B\cap C\setminus T_{\leq b\meet c}$, then since $B\cap C$ is a chain, $a>b\meet c$ and therefore $b,c\in T_{<a}$. By \Cref{BranchInitialSegment}, $B\notin T_{<c}$ and $c\notin T_{<b}$, contradicting $T_{<a}$ being a chain.
			Finally, $b\meet c\in \In(T)$ since $\left(b\meet c\right)<b,c$.
		\end{enumerate}
	\end{proof}

\begin{definition}\label{defInfProd}\ 
	\begin{enumerate}
		\item Let $\langle T,<\rangle$ be a successor meet tree. If  $\left( \cM_t \right)_{t\in \In(T)}$ is a family of structures in a relational language $\cL$ indexed by $T$, such that each $\cM_t$ is a structure whose universe is $\suc(t)$, then we call $\famTreeBig$ a \emph{family tree}. (abbreviated by $\famTree$)
		\item 	If $\famTree$ is a family tree, we define the product $\prod_T \cM_t$ to be the $\cL$-structure whose universe is $\branch(T)$ where for every $k$-ary relation $R\in \cL$ we set $R^{\prod_T \cM_t}$ to be
		\[ \Set{ (a_1,\dots, a_k) | \cM_m\models R(S_{a_1}(m),\dots, S_{a_k}(m)) \text{ where } m=a_1\land \dots\land a_k }.\]
	\end{enumerate}

\end{definition}

\begin{example}\label{subsectionOmegaLOmega}

	Recall that $\omegaLomega$ is the set of all functions $f:n\to \omega$ for some natural number $n$, and $\omegaomega$ is the set of all functions from $\omega$ to $\omega$.
	For each $a\in \omegaomega$, let $a\upharpoonright n$ be the restriction of $a$ to $n$, which is in $\omegaLomega$.
	We consider the order on $\omegaLomega$ induced by inclusion of functions, i.e. for $t,s\in \omegaLomega$, we define $t\leq s$ if there is some $n\in \omega$ such that $t = s\upharpoonright n$.
	This is indeed a partial order, and, in fact, a \emph{successor meet tree}.
	The following illustrates the order on $\omegaLomega$, where the maximal chains in the order are precisely the elements of $\omegaomega$.
	
	\includegraphics*[width=\textwidth]{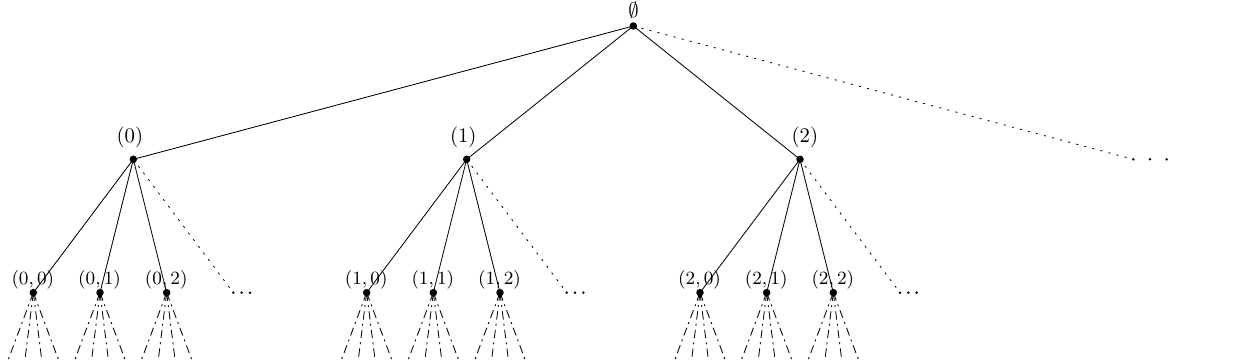}
	
		Let $\left(\cM_t\right)_{t\in \omegaLomega}$ be a family of countable structures in a relational language $\cL$. We assume that for all $t\in \omegaLomega$, the universe of $M_t$ is $\suc(t)$.
		The product $\prod_{\omegaLomega} \cM_t$ is the $\cL$-structure whose universe is $\omegaomega$ where for every $a_1,\dots, a_k\in \omegaomega$ and every $k$-ary relation $R\in \cL$, let $n\in\omega$ be maximal such that $a_1 \upharpoonright n = \dots = a_k \upharpoonright n =: s$. Then
		\[\prod_{\omegaLomega} \cM_t \models R(a_1,\dots, a_k) \iff
		 \cM_s \models R(a_1(n+1),\dots, R(a_k(n+1)).\]
\end{example}

\begin{example}\label{subsectionOmegaOmegaStar}

Recall that $\omega^*$ is the set of natural numbers endowed with the reverse ordering, i.e. $\dots <^* 2 <^* 1 <^* 0$.
For the purposes of this paper, we identify $\Braket{\omega^*,<}$ with the set of negative integers, endowed with the standard linear order on the integers, i.e., $\omega^* = \Set{-1,-2,-3,\dots}$ and $\dots<-3<-2<-1$.
In \Cref{subsectionOmegaLOmega}, we took, as the index set for the family of structures, all initial segments of $\omegaomega$, which turn out to be $\omegaLomega$.
Here we take all initial segments of the set $B:=\Set{a\in \omegaomegastar | a\text{ has finite support}}$.
For every $b\in B$, an \emph{initial segment} of $b$ is of the form $b \upharpoonright \Set{n\in \omega^* | n< m}$ for some $m\in \omega^{*}$.
Now, we define $S$ to be all initial segments of $B$, i.e.:
\[S:= \Set{ a | \exists b\in B, m\in \omega^*\ \Big( a = b\upharpoonright \Set{n\in \omega^* | n<m}\Big) }  \]
We endow $S$ with an order, similar to that of $\omegaLomega$:
$a\leq b \Leftrightarrow a\sqsubseteq b$
where $\sqsubseteq$ is the relation stating $a$ is an initial segment of $b$.
$S$ with this order is a successor meet tree as well.
The following illustrates the order on $S$. In this case, the maximal chains coincide with the maximal elements in the order, which are the elements of $B$. \\

\includegraphics*[width=\textwidth]{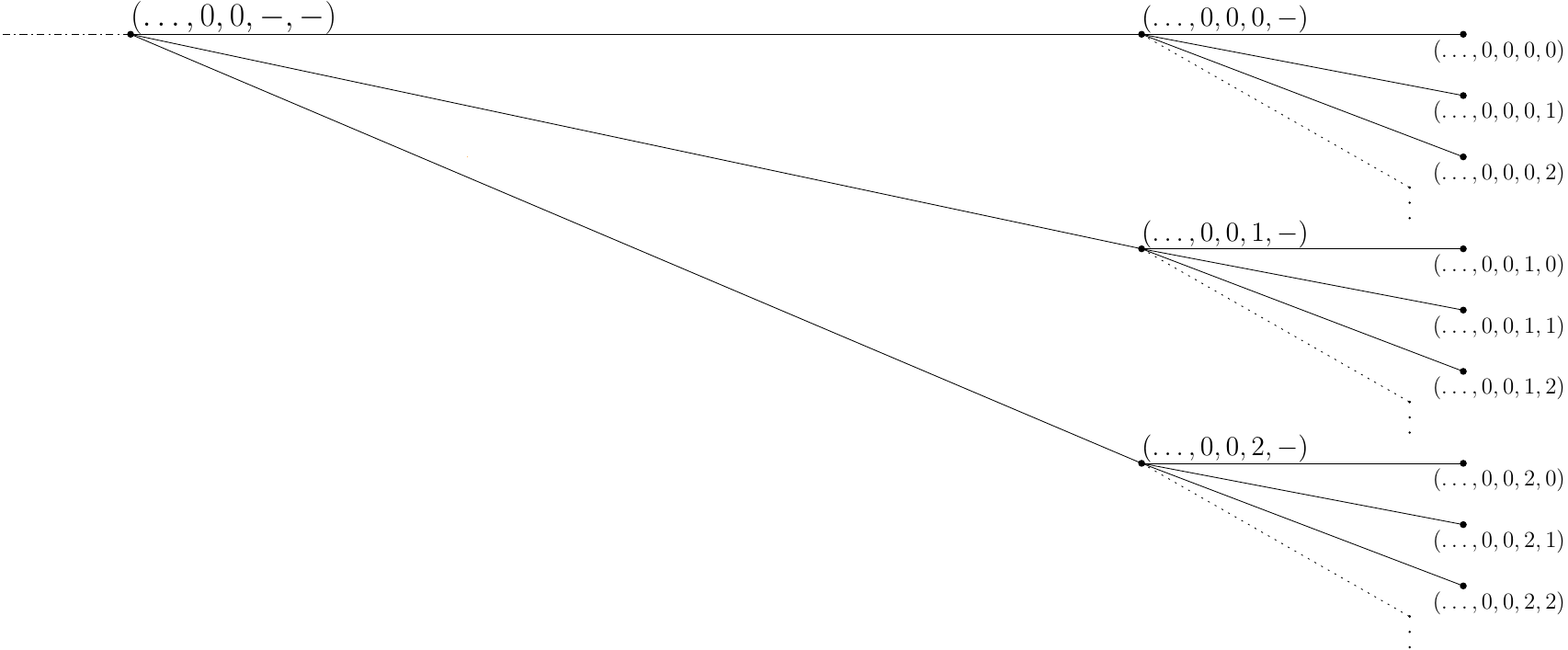}

Let  $\Braket{ \cM_s | s\in \In(S) }$ be a family of countable structures in a relational language where for all $s\in \In(S)$, the universe of $\cM_s$ is $\suc(s)$.
	As in \Cref{subsectionOmegaLOmega}, the product $\prod_S \cM_s$ is the $\cL$-structure whose universe is $B$ where for every $a_1,\dots, a_k\in B$ and every $k$-ary relation $R\in \cL$, let $n\in\omega^*$ be maximal such that $a_1 \upharpoonright n = \dots = a_k \upharpoonright n =: s$. Then
 	\[\prod_S \cM_s \models R(a_1,\dots, a_k) \iff 
 	\cM_s \models R(a_1(n+1),\dots, R(a_k(n+1)).\]
\end{example}

\begin{lemma}\label{countableIsomorphic} Let $T_1, T_2$ be successor meet trees, such that $|\suc(t)|=\aleph_0$ for any $t\in \In(T_1)\cup\In(T_2)$.
	\begin{enumerate}
		\item If the order type of all branches in $T_1$ and $T_2$ is $\omega$ then $T_1\cong T_2$.
		\item If the order type of all branches in $T_1$ and $T_2$ is $\omega^*$ then $T_1\cong T_2$.
		\item If the order type of all branches in $T_1$ and $T_2$ is $\bZ$ then $T_1\cong T_2$.
	\end{enumerate}
\end{lemma}

\begin{proof}
	In all three cases the proof goes as follows. Let $B\in\branch(T_1), C\in \branch(T_2)$.
	We construct, by induction a sequence of subsets $A_0\subseteq A_1\subseteq A_2 \subseteq \dots\subseteq T_1$ such that $\bigcup_{i<\omega}A_i = T_1$, and a sequnec of partial isomorphisms $f_n:A_n\to T_2$ such that $f_0\subseteq f_1\subseteq f_2\subseteq \dots$ and for every $i<\omega$ and $a\in A_i$, either $\suc(a)\subseteq A_i$ and $\suc(f_i(a))\subseteq f_i(A_i)$, or $|\suc(a)\setminus A| = |\suc(f_i(a))\setminus f_i(A)| = \aleph_0$. So $\bigcup_{i<\omega} f_i$ will be an isomorphism.
	
	\begin{itemize}
		\item By the assumption, there is an order isomorphism $f_0:B\to C$ and let $A_0:=B$.
		\item Let $n<\omega$ and assume $f_n:A_n\to T_2$ is a partial isomorphism as in the induction hypothesis. For all $a\in A_n$, such that $|\suc(a)\setminus A_n| = \aleph_0$, let $\Braket{s_i(a) :i<\omega}$ and $\Braket{t_i(a) : i<\omega}$ be enumerations of $\suc(a)$ and $\suc(f_n(a))$, respectively. Then we define 
		\[f_{n+1}: = f_n \cup \Set{ (s_i(a),t_i(a)) : a\in A_n,\ |\suc(a)\setminus A_n| = \aleph_0 ,\ i<\omega}.\]
	\end{itemize}
	Finally, if all branches of $T_1$ are of order type $\omega$, $\omega^*$, or $\bZ$, then for any $t\in T_1$, there is some $b\in B$ and $n\in \bN$ such that $t\in \suc^n(b)$, so $\bigcup_{i<\omega}A_i = T_1$.
\end{proof}

\begin{definition}\label{def:TreeIsomorphism}
	A \emph{family tree isomorphism} between family trees $\famTree$ and $\Braket{U,\cN_u}$ is a bijective, order preserving function $\theta:U\to T$ such that $\theta \upharpoonright \cN_u :\cN_u \to \cM_{\theta(u)}$ is an isomorphism of $\cL$-structures for all $u\in U$.
	
	If there is such an isomorphism, then $\famTree$ and $\Braket{U,\cN_u}$ are \emph{isomorphic}, denoted by $\famTree\cong\Braket{U,\cN_u}$.
\end{definition}

\begin{remark}\label{TreeIsomorphismProdIsomorphism}
	If $\famTree$ and $\Braket{U,\cN_u}$ are isomorphic, then $\prod_T \cM_t\cong \prod_U \cN_u$.
\end{remark}

\begin{definition}
	Let $S$ be a successor meet tree and let $\Braket{ T_B | B\in \branch(S) }$ be a family of successor meet trees indexed by the branches of $S$. Then we define $S*\Braket{ T_B | B\in \branch(S) }$ to be the set $\In(S)\cup \bigcup_{B\in \branch(S)} T_B$ with an order $<$ defined by
	\begin{align*}
	& \Set{(a,b) | a,b \in \In(S) \text{ and } a<b} \cup \\
	& \Set{(a,b) | a,b \in T_B\text{ for some $B\in\branch(S)$ and } a<b} \cup \\
	& \Set{(a,b) | a\in B\cap\In(S) ,\, b \in T_B\text{ for some }B\in\branch(S)}.
	\end{align*}
\end{definition}

\begin{remark}\label{decompositionAntiChain}
	If $T$ is a successor meet tree and $A\subset T$ is a maximal anti-chain, then $A\cap B = \Set{\sup(B)}$ for all $B\in \branch(T_{\leq A}) $ and 
	\[ T = T_{\leq A}* \Braket{ T_{\geq \sup(B)} | B\in \branch(T_{\leq A}) }. \]
\end{remark}

\begin{remark}
	For any $n\in \bZ$, the set $T^{\leq n}:=\Set{t\in T | \height(t)=n}$ is a maximal anti-chain.
\end{remark}

\begin{corollary}\label{decompositionHeight}
	 $\height(\sup(B)) = n$ For any $B\in \branch(T^{\leq n})$ and
		 \[T = T^{\leq n}*\Braket{T_{\geq \sup(B)} | B\in \branch(T^{\leq n})}.\]
\end{corollary}

\begin{lemma}\label{decompositionFin}	Let $S$ be a successor meet tree and let $\Braket{ T_B | B\in \branch(S) }$ be a family of successor meet trees, such that for any $B\in \branch(S)$, if $B$ has a maximal element, then $T_B$ has a minimal element.
	\begin{enumerate}
		\item  $T:=S*\Braket{ T_B}_{B\in \branch(S) }$ is a successor meet tree.
		\item Populating $T$, so that $\famTree$ is a family tree,
		so are $\Braket{S, \cM_s}$ and  $\Braket{T_B, \cM_t}$ for all $B\in \branch{S}$. 
		Furthermore, there is an isomorphism \[f: \prod_T \cM_t\cong \prod_S \cM_s \left[ \prod_{T_B} \cM_t \right]_{B\in \branch\left({S}\right)}   \]
		such that $f\left(\Set{D\in \prod_T \cM_t | D\cap T_B\neq \emptyset}\right) =  \Set{B}\times \prod_{T_B}\cM_t$ for all $B\in \branch(S)$. Moreover, if $D\cap T_B\neq \emptyset$, then $f(D) = (B,D\cap T_B)$.
	\end{enumerate}
	
\end{lemma}

\begin{proof}
	\begin{enumerate}
		\item Exercise.
		\item We will define an isomorphism $g: \prod_S \cM_s \left[ \prod_{T_B} \cM_t \right]_{B\in \branch{S}}	\cong \prod_T \cM_t$ and the wanted $f$ will be $g^{-1}$.
		Let $g$ be defined by $(B,C)\mapsto \left(B\cap \In(S)\right)\cup C$ for all $B\in \branch(S), C\in \branch(T_B)$. Clearly $g((B,C))$ is a chain. 
		
		To show maximality of $g((B,C))$, if $a\in T$ such that $a\leq b$ or $b\leq a$ for all $b\in g((B,C))$, then either $a\in T_B$ which implies $a\in C$, or $a\in \In(S)$ which implies $a\in B$. 
		
		Clearly $g$ is injective. To prove subjectivity, for any $D\in \branch(T)$, there is some $B\in \branch(S)$ such that $D\cap T_B\neq \emptyset$. It follows that $D\cap T_B$ is a maximal chain in $T_B$, so $g((B, D\cap T_B)) = D$. In conclusion $g$ is bijective.
		
		To prove $g$ is an isomorphism. Let $R\in \cL$ be a $k$-ary relation. Let $(B_1,C_1),\dots, (B_k,C_k)\in \prod_S \cM_s \left[ \prod_{T_B} \cM_t \right]_{B\in \branch{(S)}.}$
		Then exactly one of the following cases holds:
		\begin{itemize}
			\item  $\bigwedge_{1\leq i<j\leq k} B_i = B_j$, in which case $C_1,\dots, C_k\in \branch(T_{B_1})$, so $T_{B_1} \ni m:=C_1\land \dots\land C_k = g(B_1,C_1)\land \dots\land g(B_k,C_k)$ and
		\begin{align*}
		 & \prod_S \cM_s \left[ \prod_{T_B} \cM_t \right]_{B\in \branch{S}}\models R((B_1,C_1),\dots, (B_k,C_k)) \iff \\ 
		 & \prod_{T_{B_1}} \cM_t \models R(C_1,\dots,C_k) \iff \\
		 & \cM_m \models R(S_{C_1}(m),\dots, S_{C_k}(m)) \iff \\
		 & \cM_m \models R(S_{g\left((B_1,C_1)\right)}(m),\dots, S_{g\left((B_k,C_k)\right)}(m)) \iff \\
		 & \prod_T \cM_t \models R({g\left((B_1,C_1)\right)},\dots, {g\left((B_k,C_k)\right)}).
		\end{align*}

		\item $\bigvee_{1\leq i<j\leq k} B_i \neq B_j$, in which case $S \ni m:=B_1\land \dots\land B_k = g(B_1,C_1)\land \dots\land g(B_k,C_k)$ and
		\begin{align*}
		& \prod_S \cM_s \left[ \prod_{T_B} \cM_t \right]_{B\in \branch{S}}\models R((B_1,C_1),\dots, (B_k,C_k)) \iff \\ 
		& \prod_S \cM_s \models R(B_1,\dots,B_k) \iff \\
		& \cM_m \models R(S_{B_1}(m),\dots, S_{B_k}(m)) \iff \\
		& \cM_m \models R(S_{g(B_1,C_1)}(m),\dots, S_{g(B_k,C_k)}(m)) \iff \\
		& \prod_T \cM_t \models R({g(B_1,C_1)},\dots, {g(B_k,C_k)}).
		\end{align*}
		\end{itemize}		
		Finally, to prove $g\left(\Set{B}\times \prod_{T_B}\cM_t\right) = \Set{D\in \prod_T \cM_t | D\cap T_B\neq \emptyset}$ for all $B\in \branch(S)$,  If $B\in \branch(S), C\in \prod_{T_B}\cM_t$, let $D: = g((B,C)) = \left(B\cap \In(S)\right)\cup C$ and $D\cap T_{B'}=\left[\left(B\cap \In(S)\right)\cup C\right]\cap T_{B'} = C\cap T_{B'}$, and the latter is non-empty if and only if $C\in \prod_{T_B'}\cM_t$ which happens exactly when $B=B'$. In fact, if $C\cap T_{B}\neq \emptyset$, then $C\subseteq T_{B}$, therefore $D\cap T_{B}=C\cap T_{B} = C$. So if $D\cap T_B\neq \emptyset$, then $f(D)=(B,D\cap T_B)$.
	\end{enumerate}
\end{proof}

\begin{lemma}\label{infProdElemEmbed1}
	Let $T$ be a successor meet tree, $\famTree$, $\Braket{T,\cN_t}$ family trees, and $t_0\in T$, such that $\cM_{t_0}\prec\cN_{t_0}$ and $\cM_t= \cN_t$ for all $t_o\neq t\in T$. Then $\prod_T\cM_t \prec \prod_T\cN_t$.
\end{lemma}

\begin{proof}
	Let $A\subset T$ be any maximal anti-chain such that $t_0\in A$. By \Cref{decompositionAntiChain} and \Cref{decompositionFin}, we have
	\begin{align*}
	& \prod_T\cM_t \cong \prod_{T_{\leq A}} \cM_t \left[ \prod_{T_{\geq \sup(B)}} \cM_t \right]_{B\in \branch(T_{\leq A})} \\
	& \prod_T\cN_t \cong \prod_{T_{\leq A}} \cN_t \left[ \prod_{T_{\geq \sup(B)}} \cN_t \right]_{B\in \branch(T_{\leq A})}.
	\end{align*}
	Let $\cP:=\prod_{T_{\leq A}} \cM_t$, $\cP':=\prod_{T_{\leq A}} \cN_t$, $\cS_B:= \prod_{T_{\geq \sup(B)}} \cM_t$, and $\cS'_B:= \prod_{T_{\geq \sup(B)}} \cN_t$ for all $B\in \branch(T_{\leq A})$.
	So $\prod_T \cM_t\cong \cP[\cS_B]_{B\in \cP}$ and $\prod_T \cN_t\cong \cP'[\cS'_B]_{B\in \cP}$.
	Now there is some $B_0\in \branch(T_{\leq A})$ such that $\{t_0\} = B_0\cap A$ and $t_0=\sup(B_0)$.  Since $\cM_t=\cN_t$ for all $t\neq t_0$, it follows that \[\cP = \prod_{T_{\leq A}}  \cM_t = \prod_{T_{\leq A}} \cN_t = \cP'\] and also \[\cS_B = \prod_{T_{\geq \sup(B)}} \cM_t = \prod_{T_{\geq \sup(B)}} \cN_t =\cB'_B\] for all $B_0\neq B\in \branch(B)$. If $\prod_{T_{\geq t_0}} \cM_t \prec \prod_{T_{\geq t_0}} \cN_t$,
	then $\cS_{B_0}\prec \cS'_{B_0}$ and,  by \Cref{FinProdElemEmbed},
	\[ \prod_T \cM_t \cong \cP'[\cS'_B]_{B\in \cP}\prec \cP[\cS_B]_{B\in \cP}\cong \prod_T \cN_t. \]
	So it suffices to show $\prod_{T_{\geq t_0}} \cM_t \prec \prod_{T_{\geq t_0}} \cN_t$. If $T_{\geq t_0}$ is a tree of finite height, then the claim follows from \Cref{FinProdElemEmbed}.
	 Otherwise, let $S:=T_{\leq t_0}$ and let $A':=\suc(t_0)$. Notice that $A'$ is a maximal anti-chain in $S$ and $t_0\in S_{\leq A'}$. Moreover, $S_{\leq A'}$ is a tree of finite height. Again, by \Cref{decompositionAntiChain}, we have 
	 	\begin{align*}
	 & \prod_S\cM_t \cong \prod_{S_{\leq A'}} \cM_t \left[ \prod_{S_{\geq \sup(B)}}\cM_t \right]_{B\in \branch(S_{\leq A'})} \\
	 & \prod_S\cN_t \cong \prod_{S_{\leq A'}} \cN_t \left[ \prod_{S_{\geq \sup(B)}}\cN_t \right]_{B\in \branch(S_{\leq A'})}.
	 \end{align*}
	 In this case, $t_0\in S_{\leq A'}$, therefore $\prod_{S_{\geq \sup(B)}}\cM_t = \prod_{S_{\geq \sup(B)}}\cN_t$ for all $B\in \branch(S_{\leq A'})$. So it suffices to show that $\prod_{S_{\leq A'}} \cM_t\prec \prod_{S_{\leq A'}} \cN_t$, but this follows from \Cref{oneIsIsoToSucc}.
\end{proof}

\begin{corollary}\label{infProdElemEmbed}
	Let $T$ be a successor meet tree, and let $\famTree$, $\Braket{T,\cN_t}$ be family trees.
	\begin{enumerate}
		\item\label{infProdElemEmbed11} If  $\cM_{t}\prec\cN_{t}$ for all $t\in T$, then $\prod_T\cM_t \prec \prod_T\cN_t$.
		\item\label{infProdElemEmbed22} If $\cM_t\equiv \cN_t$ for all $t\in T$, then $\prod_T\cM_t \equiv \prod_T\cN_t$. 
	\end{enumerate}
\end{corollary}
\begin{proof}
	\Cref{infProdElemEmbed11} is by \Cref{infProdElemEmbed1} and induction. For \Cref{infProdElemEmbed22}, for each $t\in T$, let $\bM_t$ be a sufficiently saturated model of $\Th(\cM_t)$. Then there are elementary embeddings $\cM_t, \cN_t\into \bM_t$. By \Cref{infProdElemEmbed11} of this \namecref{infProdElemEmbed}, we can find elementary embeddings $\prod_T\cM_t , \prod_T\cN_t\into \prod_T\bM_t$.
\end{proof}

\section{Countable  tree products}\label{subsecCountableProducts}

In this \namecref{subsecCountableProducts} we restrict ourself to the case where the structures in the product, as well as the trees themselves, are all countable. 
Furthermore, as in the case of trees of finite height, we assume for simplicity all successor meet trees are leveled, i.e., any two branches have the same order type.

\begin{remark}\label{countableCases}
If $\langle T,<\rangle$ is a leveled countable tree of infinite height such that $|\suc(a)|=\aleph_0$ for all $a\in \In(T)$, then exactly one of the following holds.
\begin{enumerate}
	\item\label{caseOmega} Any branch is of order type $\omega$, which by \Cref{countableIsomorphic}, is isomorphic to \Cref{subsectionOmegaLOmega}.
	\item\label{caseOmegaStar} Any branch is of order type $\omega^*$, which by \Cref{countableIsomorphic}, is isomorphic to \Cref{subsectionOmegaOmegaStar}.
	\item\label{caseZ} Any branch is of order type $\bZ$, which  by \Cref{countableIsomorphic} and \Cref{decompositionFin} is isomorphic to a finite product of the first two cases.
\end{enumerate}
\end{remark}
In each of the cases above, there is a canonical definition of \emph{height} for any element of $T$, as follows:

\begin{definition}
	If $T=\omegaLomega$ or $T=S$ from \Cref{subsectionOmegaOmegaStar} and $t\in T$, then $\height(t)$ is defined to be the maximum of the domain of $t$; i.e., if $t = (7,3,2,8)\in \omegaLomega$ then $\height(t) = 3$, if $t = (0,\dots, 0, 3, 2 , 17, -, -, -)\in S$ then $\height(t) = -4$. For $t = ()\in \omegaLomega$ we set $\height(t):= -1$.

	If $T=S*\Braket{T_B | B\in \branch(S)}$ where $S$ is as in \Cref{subsectionOmegaOmegaStar} and $T_B= \omegaLomega$, then $\height(t)$ is well defined and furthermore, $\height(t_1)<\height(t_2)$ for all $t_1<t_2\in T$ and $\suc(\height(t_1)) = \height(t_2) \iff t_2\in \suc(t_1)$.
\end{definition}
We can now expand any countable product by infinitely many equivalence relations, in the same fashion as in \Cref{defGenProduct}:

\begin{definition}
	Let $\famTree$ be a family tree. Let $\left(\prod_T\cM_t\right)^s$ be an expansion of $\prod_T \cM_t$ by binary relation symbols $\Set{s_n | n\in \bZ}$ interpreted as:
	\[ s_n(x,y)\iff \height(x\meet x)\geq n. \]
\end{definition}

\begin{remark}
	Let $\famTree$ be a family tree. Let $\cM^s_t$ be $\cM_t$ expanded by binary relation symbols $\Set{s_n | n\in \bZ}$ interpreted as:
	\[ \left(s_n\right)^{\cM_t^s}=\twopartdef{\left({\cM_t}\right)^2}{\height(t)\geq n}{\emptyset}{\height(t)< n.} \]
	Then $\left(\prod_T\cM_t\right)^s \cong \prod_T \cM^s_t$. 
	
	\noindent \emph{For this reason, we identify the two constructions and denote the two by $\prod_T \cM^s_t$.}
\end{remark}

\begin{remark}\label{qfTypeChar}
	If $\cM_t$ is transitive for all $t\in T$ and $\bar{a}$ is a tuple in $\prod_T \cM^s_t$ then
	\begin{align*}
	& \tp^{qf}(\bar{a}) = \tp^{qf}(\bar{a}) \upharpoonright \Set{s_n | n\in \bZ}\  \cup\\ & \bigcup\Set{\tp^{qf}_{\cM_m}\left(S_{a_1}(m),\dots,S_{a_k}(m)\right) | a_1,\dots,a_k\in \bar{a}\text{ and } m = a_{1}\meet\dots \meet a_{k}  }_.
	\end{align*}
\end{remark}

\subsection{Dense substructures in countable tree products}\label{secDence}

If $\famTree$ is a family tree where every branch in $T$  is of order type $\omega^*$ (e.g., \Cref{subsectionOmegaOmegaStar}), then $\prod_T\cM_t$ is countable. On the other hand, if every branch in $T$ is of order type $\omega$, then $|\prod_T\cM_t|=2^{\aleph_0}$. In order to keep the size of a product of any countable tree of countable structures to be countable, we introduce the notion of a dense substructure. A dense substructure may be countable, and as an induced substructure will be elementarily equivalent to the product, as will follow from \Cref{denseElemEquiv}. The main result of this \namecref{secDence}, \Cref{denseUnique}, states that under certain homogeneity assumptions on the structures of a countable family tree $\famTree$ dense substructures of the product are homogeneous, and unique up to isomorphism; the precise assumption on $\famTree$ is that it is \emph{pure}, as defined in \Cref{defWpure}.

\begin{definition}
	Let $\famTree$ be a family tree. A substructure $\cD\subseteq \prod_{t\in T} \cM^s_t$ is \emph{dense}
	if for all $t\in T$, there is some $d\in \cD$ such that $d\ni t$.
	
	Clearly whenever $T$ is countable there is a countable dense substructure.
\end{definition}

\begin{remark}\label{denseSubtree}
	Let $\famTree$ be a family tree and let $A\subset T$ be a maximal anti-chain. 
	A substructure  $\cN\subseteq \prod_{t\in T} \cM^s_t$ is dense
		 if and only if 
	for all $a\in A$ and for all $t\geq a$, there is some $d\in \cN$ such that $d\ni t$.
\end{remark}

\begin{remark}\label{omegaStarnonrelevant}
	If every branch in $T$ is of order type $\omega^*$, and $\famTree$ is a family tree with $\cN\subseteq \prod_{t\in T} \cM^s_t$ dense then $\cN = \prod_{t\in T} \cM^s_t$.
\end{remark}

Before continuing, we define a special kind of family tree that will be central throughout this \namecref{secDence}:

\begin{definition}\label{defWpure}
		A family tree $\famTree$  is \emph{pure} if $\cM_t$ is transitive and homogeneous for all $t\in T$ and  $\height(t) = \height(u)\implies \cM_t \cong \cM_u$ for all $t, u\in T$. It is $\omega$-pure if, in addition, branches in $T$ are of order type $\omega$.
\end{definition}

\begin{lemma}\label{BAFstep}
	Let $\famTree$ be a {pure} family tree. If  $\cN\subseteq \prod_T\cM_t^{{s}}$ is a countable dense substructure, then:
	\begin{enumerate}
		\item \label{BAFstep:ultrasat} For any countable $A\subseteq \infProds$, there is $A'\subseteq \cN$ such that $A\cong A'$.
		\item  \label{BAFstep:ultrahom} $\cN$ is  transitive and homogeneous.
	\end{enumerate}
	
\end{lemma}

\begin{proof}
	Let $\bar{a}, b\in \prod_T\cM_t^{{s}}$ and $\bar{c}\in \cN$ where $\bar{a},\bar{c}$ are finite tuples and $\tp^{qf}(\bar{a}) = \tp^{qf}(\bar{c})$. To prove both \Cref{BAFstep:ultrasat} and \ref{BAFstep:ultrahom}, it suffices to find some $d\in \cN$ such that $\tp^{qf}(\bar{a},b) = \tp^{qf}(\bar{c},d)$. If $\bar{a}=\bar{c}=\emptyset$ then by \Cref{qfTypeChar}, for any $d\in \cN$, the mapping $b\mapsto d$ is a partial isomorphism. Otherwise, let $f:\bar{a}\to \bar{c}$ be a partial isomorphism. Let
	$ t_0:=\max\Set{a\meet b | a\in \bar{a}} $. Notice that unless $b\in\bar{a}$, in which case the proof is trivial, $t_0$ exists, as a maximum of finite elements in the chain $b$. 
	Let $\height(t_0) = m$. Let $A_0:= \Set{a\in \bar{a} | a\ni t_0}$. Notice that $A_0$ is the $s_m$-equivalence class of $b$ in $\bar{a}$ and $A_0\neq \emptyset$. Then $f(A_0)$ is also an $s_m$ equivalence class in $\bar{c}$, so there is some $t_1\in T$ with $\height(t_1)=m$ such that $f(A_0) = \Set{c\in\bar{c} | c\ni t_1}$. Since $\cM_{t_0} \cong \cM_{t_1}$ and $\cM_{t_1}$ is homogeneous, it follows that there is some $s\in \cM_{t_1}$ such that
		\begin{align*}
		& \tp^{qf}_{\cM_{t_1}}\left(s, \Set{S_{f(a)}(t_1) | a\in A_0}\right) = 
		 \tp^{qf}_{\cM_{t_0}}\left(S_b(t_0), \Set{S_{a}(t_0) | a\in A_1}\right). 
		\end{align*}
		By density of $\cN$, there is some $d\in \cN$ such that $d\ni s$ and and therefore, by \Cref{qfTypeChar}, $\tp^{qf}(\bar{a},b) = \tp^{qf}(\bar{c},d)$.
\end{proof}

\begin{theorem}\label{denseUnique}
	Let $\famTree$ be a {pure} family tree.
	\begin{enumerate}
		\item Up to isomorphism, there is a unique countable dense substructure $\cD\subseteq \infProds$.
		\item\label{denseUniqueHom} Such a $\cD$ is transitive and homogeneous.
	\end{enumerate}
	\begin{proof}
		Let $\cN_1, \cN_2 \subseteq \infProds$ be two countable dense substructures. By \Cref{BAFstep}, they are both transitive homogeneous, so to prove both 1 and 2 it is left to show that $\cN_1\cong \cN_2$. For that, by \Cref{BAFstep}, every substructure $A\subseteq \cN_1$ is embeddable in $\cN_2$ and vice-versa. Using this fact and homogeneity, a standard back-and-forth argument yields an isomorphism between $\cN_1$ and $\cN_2$.
	\end{proof}
\end{theorem}

\begin{corollary}\label{denseIso}
	Let $\famTree$ and $\Braket{U,\cN_u}$ be isomorphic pure trees.
	If $\cD_1, \cD_2$ are countable dense substructures  in $\infProds, \prod_{U}\cN_u^s$ respectively, then $\cD_1\cong \cD_2$.
\end{corollary}

\begin{proof}
	By \Cref{TreeIsomorphismProdIsomorphism}, $\infProds \cong \prod_{U}\cN^s_u$, so $\cD_2$ is isomorphic (via the restriction of an isomorphism) to a dense substructure of $\infProds$, which in turn, by \Cref{denseUnique}, is isomorphic to $\cD_1$.
\end{proof}

\begin{corollary}\label{denseElemEquiv}
	Let $\famTree$ be a {pure} family tree.
	If $\cD_1,\cD_2\subseteq \infProds$ are dense then $\cD_1\equiv_{{\omega_1,\omega}} \cD_2$.
\end{corollary}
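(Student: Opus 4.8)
The plan is to reduce the statement to the isomorphism part of \Cref{denseUnique} by passing to dense elementary substructures of size exactly $\kappa$. Write $S\subseteq T$ for the exceptional set witnessing almost $\kappa$-purity, so that $\height(S)<\height(T)$. Since elementary equivalence is transitive and is inherited along elementary substructures, it suffices to find, for $i\in\{1,2\}$, a dense substructure $\cD_i'\prec\cD_i$ with $|\cD_i'|=\kappa$: by \Cref{denseUnique}(1) the two structures $\cD_1'$ and $\cD_2'$ are then isomorphic, whence $\cD_1\equiv\cD_1'\cong\cD_2'\equiv\cD_2$.

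First I would fix, for each $i$, a dense subset $X_i\subseteq\cD_i$ of size $\kappa$. Since $\cD_i$ is dense, for every $t\in T$ I may choose a branch $a_t\in\cD_i$ with $t\in a_t$, and as $|T|=\kappa$ the set $\Set{a_t|t\in T}$ is dense of size $\le\kappa$. To see it may be taken of size exactly $\kappa$, note that $|\cD_i|\ge\kappa$: choosing any node $t$ of height strictly between $\height(S)$ and $\height(T)$ (possible since $\height(S)<\height(T)$) we have $t\notin S$, so $|\cM_t|=\kappa$, and since $\leaf(T)=\emptyset$ its $|\suc(t)|=|\cM_t|=\kappa$ successors are pairwise $<$-incomparable; each must be met by some branch of $\cD_i$ and no branch meets two of them, whence $|\cD_i|\ge\kappa$. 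Thus I may enlarge $X_i$ to have size exactly $\kappa$.

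Next I would apply the downward L\"owenheim--Skolem theorem to $\cD_i$ in the language $\cL\cup\Set{s_\alpha|1\le\alpha<\height(T)}$. As $T$ is leveled we have $\height(T)\le|T|=\kappa$, so (assuming $|\cL|\le\kappa$) this language has size $\le\kappa$; L\"owenheim--Skolem therefore yields $\cD_i'\prec\cD_i$ with $X_i\subseteq\cD_i'$ and $|\cD_i'|=\kappa$. Because density only requires every node of $T$ to be met by some element, every superset of the dense set $X_i$ inside $\infProds$ is again dense; in particular each $\cD_i'$ is a dense substructure of $\infProds$ of size $\kappa$.

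With $\cD_1'$ and $\cD_2'$ in hand, \Cref{denseUnique}(1) applied to the almost $\kappa$-pure family tree $\famTree$ gives $\cD_1'\cong\cD_2'$, hence $\cD_1'\equiv\cD_2'$, and the chain displayed in the first paragraph yields $\cD_1\equiv\cD_2$. The main point requiring care is the simultaneous control of the two features of $\cD_i'$: that it be dense and that it have size \emph{precisely} $\kappa$. Density is secured by seeding the L\"owenheim--Skolem construction with the dense set $X_i$, while the size being exactly $\kappa$ rather than merely $\le\kappa$ (which would not suffice to invoke \Cref{denseUnique}) rests on the two bounds $|\cD_i|\ge\kappa$ and $\height(T)\le\kappa$.
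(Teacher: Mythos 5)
Your proof is correct and follows essentially the same route as the paper's: pick a dense subset of size $\kappa$ inside each $\cD_i$, close it to an elementary substructure of size $\kappa$ by downward L\"owenheim--Skolem, and apply \Cref{denseUnique} to identify the two resulting dense substructures up to isomorphism, giving $\cD_1\equiv\cD_1'\cong\cD_2'\equiv\cD_2$. Your write-up is in fact more careful than the paper's terse proof, since you justify that $|\cD_i|\geq\kappa$ (so the dense set can be taken of size exactly $\kappa$), that the language has size at most $\kappa$, and that supersets of dense sets stay dense.
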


\begin{proof}
	Let $\cD_{01}\subseteq \cD_1$, $\cD_{02}\subseteq \cD_2$ be countable dense substructures. By downwards L\"{o}wenheim-Skolem for $\cL_{\omega_1,\omega}$, there are countable $\cA_1,\cA_2$ such that $\cD_{01}\subseteq \cA_1\preceq_{{\omega_1,\omega}} \cD_1$ and  $\cD_{02}\subseteq \cA_2\preceq_{{\omega_1,\omega}} \cD_2$. Since $\cD_{01}$ and $\cD_{02}$ are dense, so are $\cA_1$ and $\cA_2$. Therefore, by \Cref{denseUnique}, $\cA_1\cong \cA_2$. In conclusion, 
	$ \cD_1\succeq_{{\omega_1,\omega}} \cA_1 \cong \cA_2 \preceq_{{\omega_1,\omega}} \cD_2.  $
\end{proof}

\begin{notation}
	For $\cL$-structures $\cM$ and $\cN$, we denote  $\cM \sim_e \cN$ if $\cM$ can be elementarily embedded in $\cN$ and vice-versa.
\end{notation}

\begin{lemma}\label{allElemEmbed}
	Let $\famTree$ and $\Braket{T,\cN_t}$ be family trees such that $\famTree$ is {pure}.
	If $\cN_{t} \succeq \cM_{t}$ for all $t\in T$, then for any countable dense substructure $\cD_1 \subseteq \prod_T \cM^s_t$ there is some countable dense elementary substructure $\cD_2\preceq  \prod_T \cN^s_t$ such that $\cD_1$ embeds elementarily into $\cD_2$.
\end{lemma}
\begin{proof}
	By downwards L\"{o}wenheim-Skolem, there is some countable dense elementary substructure $\cD'_1\prec \prod_T \cM^s_t$. By \Cref{denseUnique}, $\cD_1\cong \cD'_1$, so we may assume $\cD_1\prec \prod_T \cM^s_t$. Now by \Cref{infProdElemEmbed}, there is an elementary embedding $e: \prod_T \cM^s_t \hookrightarrow \prod_T \cN^s_t$. Again, by L\"{o}wenheim-Skolem, there is some countable dense elementary substructure $e(D_1)\subseteq \cD_2\prec \prod_T \cN^s_t$. So if $\iota$ is the inclusion map we have the following commutative diagram:
	
	\begin{tikzpicture}
\matrix (m) [matrix of math nodes,row sep=1em,column sep=1em,minimum width=1em]
{\prod_T \cM^{s}_t & \ \ \ \ \  & \prod_T \cN^{s}_t &  & \\ \ & \ & \ & \\  \cD_1 & & \cD_2 &   \\ };
\path[right hook->] 
(m-1-1) edge node [above,sloped, allow upside down] {$e$} (m-1-3)

(m-3-1) edge node [left] {$\iota$} node [below, sloped] {$\prec$} (m-1-1)
edge node [above,sloped] {$e$} (m-3-3)
(m-3-3) edge node [left] {$\iota$} node [below, sloped] {$\prec$} (m-1-3);
\end{tikzpicture}

So $e:\cD_1\into \cD_2$ is elementary.
\end{proof}

\begin{lemma}\label{allallElemEmbed}
	Let $\famTree$ and $\Braket{T,\cN_t}$ be family trees such that $\famTree$ is {pure}.
	If $\cN_{t} \preceq \cM_{t}$ for all $t\in T$, then for any countable dense substructure $\cD_1 \subset \prod_T \cM^s_t$ and any countable dense elementary substructure $\cD_2\prec  \prod_T \cN^s_t$ there is an elementary embedding of  $\cD_2$ into $\cD_1$.
\end{lemma}
\begin{proof}
	By \Cref{infProdElemEmbed}, there is an elementary embedding $e: \prod_T \cN^s_t \into \prod_T \cM^s_t$. By L\"{o}wenheim-Skolem, there is some countable dense elementary substructure $e(D_2)\subseteq \cD'_1\prec \prod_T \cM^s_t$. So if $\iota$ is the inclusion map we have the following commutative diagram:
	
	\begin{tikzpicture}
	\matrix (m) [matrix of math nodes,row sep=1em,column sep=1em,minimum width=1em]
	{\prod_T \cM^{s}_t & \ \ \ \ \  & \prod_T \cN^{s}_t &  & \\ \ & \ & \ & \\  \cD'_1 & & \cD_2 &   \\ };
	\path[left hook->] 
	(m-1-3) edge node [above] {$e$} (m-1-1)
	(m-3-3) edge node [above,sloped] {$e$} (m-3-1);
	\path[right hook->] 
	(m-3-1) edge node [left] {$\iota$} node [below, sloped] {$\prec$} (m-1-1)
	(m-3-3) edge node [left] {$\iota$} node [below, sloped] {$\prec$} (m-1-3);
	\end{tikzpicture}
	
	So $e:\cD_2\into \cD'_1$ is elementary. Now by \Cref{denseUnique}, $\cD'_1\cong \cD_1$.
\end{proof}

\begin{corollary}\label{allallElemEmbedBoth}
	Let $\famTree$ and $\Braket{T,\cN_t}$ be family trees such that $\famTree$ is {pure}.
If $\cN_{t} \sim_e \cM_{t}$ for all $t\in T$, then for any countable dense substructure $\cD_1 \subset \prod_T \cM^s_t$ there is some countable dense elementary substructure $\cD_2\prec  \prod_T \cN^s_t$ such that $\cD_1\sim_e\cD_2$.
\end{corollary}

\begin{proof}
	By \Cref{allElemEmbed}, we can find some countable dense elementary substructure $\cD_2\prec \prod_T \cN^s_t$ such that $\cD_1$ elementarily embeds into $\cD_2$. By \Cref{allallElemEmbed}, $\cD_2$ elementarily embeds into $\cD_1$.
\end{proof}

\begin{lemma}\label{densePartition}
	Let $\famTree$ be a countable family tree and $A\subset T$ a maximal anti-chain. Then there is an isomorphism \[ f: \prod_T \cM^s_t \cong \prod_{T_{\leq A}}\cM^s_t \left[ \prod_{T_{\geq \sup(B)}} \cM^s_t \right]_{B\in \branch(T_{\leq A})} \]
	such that 
	\begin{enumerate}
		\item $f\left( \Set{D\in \cN | D\cap T_{\geq \sup(B)}\neq \emptyset} \right) =  f(\cN) \cap \left(\Set{B}\times \prod_{T_{\geq \sup(B)}}\cM^s_t\right)$ for all $\cN\subset \infProds,\ B\in \branch(T_{\leq A})$.
		\item\label{denseThenProdDense} If $\cD\subseteq \prod_T \cM^s_t$ is dense then for every $B\in \branch(T_{\leq A})$, there is a dense $\cD_B\subseteq \prod_{T_B} \cM_t$ such that
		$f(\cD) = \prod_{T_{\leq A}} \cM_t \Big[ D_B \Big]_{B\in \branch(T_{\leq A})} $
		
		\item\label{denseThenProdDenseInv} Conversly, if $\cD_B\subseteq \prod_{T_B} \cM_t$ is dense for every $B\in \branch(T_{\leq A})$, then \[f^{-1}\left(\prod_{T_{\leq A}} \cM_t \Big[ D_B \Big]_{B\in \branch(T_{\leq A})}\right)\] is dense in $\prod_T \cM^s_t$.
	\end{enumerate}
\end{lemma}
\begin{proof}
	Let $f$ be the isomorphism provided by \Cref{decompositionFin}.
	\begin{enumerate}
		\item follows immediately from the definition of $f$.
		
		So for every $\cD\subset \prod_T\cM^s_t$ and every $B\in\branch\left({T_{\leq A}}\right)$ there is some $\cD_B \supseteq \prod_{T_{\geq \sup(B)}}\cM_t^s$ such that \[f\left(\Set{d\in \cD | d\cap T_{\geq \sup(B)}\neq \emptyset}\right) =\Set{B}\times \cD_B.\] So $f(\cD)= \prod_{T_{\leq A}} \cM_t \Big[ \cD_B \Big]_{B\in \branch(T_{\leq A}).}$

		To prove both \ref{denseThenProdDense} and \ref{denseThenProdDenseInv}, by \Cref{denseSubtree}, $\cD$ is dense if and only if for any $a\in A$ and $t\geq a_0$, there is some $d\in\cD$ such that $d\ni t$.
		
		\item Assume $\cD_B$ is dense for all $B\in \branch(T_{\leq A})$. For  any $a_0\in A$ and $t\geq a_0$,  there is some $B_0\in \branch(T_{\leq A})$ such that $a_0=\sup(B_0)$. In particular, $t\in T_{B_0}$ and by density of $\cD_{B_0}$, there is some $C_0\in \cD_{B_0}$ such that $C_0\ni t$. Now let $d:=f^{-1}(B_0,C_0)$. Then $C_0 = d\cap T_{B_0}$. in particular, $t_0\in C_0\subseteq d$.
		\item If $\cD$ is dense, given $t\in T_B$, by density of $\cD$, there is some $d\in \cD$ such that $d\ni t$. Now let $(B,C):=f(d)$. Then $C\in \cD_B$ and $C=d\cap T_B\ni t$.
	\end{enumerate}
\end{proof}

\begin{corollary}\label{denseSub}
	Let $\famTree$ be a countable family tree and let $\cD\subseteq \infProds$ be dense. Then for any $t_0\in T$, the substructure induced on $\cD_{\ni t_0}:=\Set{d\in \cD | d\ni t_0}$ is isomorphic to some dense substructure $\cD_0$ of $\prod_{T_{\geq t_0} \cM_t}$.
\end{corollary}
\begin{proof}
	Let $A$ be a maximal anti-chain such that $t_0\in A$, then there is some $B_0\in \branch(T_{\leq A})$ such that $t_0=\sup(B_0)$. Notice that $\cD_{\ni t_0} = \Set{d\in \cD | d\cap T_{\geq \sup(B_0)}\neq \emptyset}$. 
	
	Let  \[f: \prod_T \cM^s_t \cong \prod_{T_{\leq A}}\cM^s_t \left[ \prod_{T_{\geq \sup(B)}} \cM^s_t \right]_{B\in \branch(T_{\leq A})}\] be an isomorphism provided by \Cref{densePartition}. Then for every $B\in \branch(T_{\leq A})$, there is a dense $\cD_B\subseteq \prod_{T_B} \cM_t$ such that
	$f(\cD) = \prod_{T_{\leq A}} \cM_t \Big[ D_B \Big]_{B\in \branch(T_{\leq A})} $. Thus
	\begin{align*}
	f\left(\cD_{\ni t_0} \right) =  f\left(\cD \right)& \cap \left(\Set{B_0}\times \prod_{T_{\geq \sup(B_0)}}\cM^s_t\right) = \\
	\prod_{T_{\leq A}} \cM_t \Big[ D_B \Big]_{B\in \branch(T_{\leq A})}  & \cap \left(\Set{B_0}\times \prod_{T_{\geq \sup(B_0)}}\cM^s_t\right) = & \Set{B_0}\times \cD_{B_0}  \cong  \cD_{B_0}. \\
	\end{align*}
\end{proof}

\section{(Elementary) indivisibility of infinite tree products}\label{sec:elemIndInf}
 Recall a first-order relational structure is \emph{elementarily indivisible} if for every colouring of its universe in two colours, there is a monochromatic elementary substructure isomorphic to it.

The aim of this \namecref{sec:elemIndInf} is to prove the following theorem, and to utilize it to give an example of a rigid elementarily indivisible structure, giving a negative answer to \Cref{QFinal}. This, together with \cite{Me}, completes answering all questions from \cite{HKO11}.

\begin{theorem}\label{infProdIndOmega}
	Let $\famTree$ be an {$\omega$-pure} family tree, where $\cM_t$ is indivisible for all $t\in T$.
		If  $\cD\subset \infProds$ be a countable dense substructure, then $\cD$ is elementarily indivisible.
\end{theorem}
\begin{proof}
	By \Cref{denseUnique}, \Cref{denseUniqueHom}, $\cD$ is homogeneous, so by \Cref{indivisibleHom} indivisibility and elementary indivisibility coincide. To prove indivisibility, let $c:\cD\to \{\red,\ \blue \}$. By \Cref{denseIso}, it suffices to find a subtree $S\subset T$ and a tree isomorphism $\theta: S\to T$, such that $\theta \upharpoonright \cM_s : \cM_s \to \cM_{\theta(s)}$ is an isomorphism of $\cL$ structures, and a countable dense monochromatic substructure $\cD_2\subset \prod_{S} \cM_t$. 
	For every $t\in T$, let $\cD_{\ni t}:=\Set{a\in \cD | a\ni t}$. So $c$ induces a sub-colouring of $\cD_{\ni t}$. We colour $T$ as follows: 
	\[C(t):=\twopartdef{\blue}{\cD_{\ni t}\text{ contains an isomorphic monochromatic-blue copy of itself.}}{\red}{not.}\]
		If $C(\rot(T)) = \blue$ then we are done. Otherwise, we continue constructing a $C$-red $S$ and $\theta:S\to T$ by induction on $\height(t)$:
	\begin{enumerate}
		\item $S_0 = \rot(T) ;\ \theta_0 = (\rot(T),\rot(T))$.
		\item Assume $C(t)=\red$ for all $t\in S_{n}$ and let $s\in S_n$. by indivisibility of $\cM_s$, either $B(s):=\Set{t\in \suc(s) | C(t) = \blue}$ or $R(s):=\Set{t\in \suc(s) | C(t) = \red}$ contains an isomorphic copy of $\cM_s$. 
	\end{enumerate}
\begin{itemize}
	\item If $B(s)$  contains an isomorphic copy of $\cM_s$, denote it by $\cM'_s$, then $\cD_{\ni u}$ contains an isomorphic monochromatic-blue copy $\cD'_{\ni u}$ of itself for every $u\in \cM'_s$.
	
	By \Cref{denseSub}, for every $u\in T$, there is some dense substructure $\cD_u\subseteq \prod_{T_{\geq u}}\cM_t$ such that  $\cD_u\cong\cD_{\ni u}$. Let $S:=T_{\geq s}$. By \Cref{densePartition} and \Cref{denseUnique}, 
	\begin{align}\label{NmB}
	\cD_{s} \cong \prod_{S_{\leq \suc(s)}} \cM_t^s \left[ \cD_{\ni \sup (B)} \right]_{B\in \branch(S_{\leq \suc(s)})}
	\end{align}
		$\sup(B)\in \suc(s)$ for every $B\in \branch(S_{\leq \suc(s)})$.
		By \Cref{oneIsIsoToSucc}, $\prod_{S_{\leq \suc(s)}} \cM_t^s \cong \cM_s^s$, so together with \Cref{NmB},
	\begin{align}\label{NmB2}
		\cD_{s} \cong  \cM_s^s \left[ \cD_{\ni u} \right]_{u\in \suc(s)}
		\end{align}
	On the other hand, notice that the induced substructure on $\bigcup_{u\in \cM'_s} \cD'_{\ni u}\subseteq \cD_{\ni s}$ is isomorphic to ${\cM'_s}^s\left[ \cD'_{\ni u} \right]_{u\in \cM'_s}$, which in turn, by \Cref{NmB2} is isomorphic to $\cD_s\cong \cD_{\ni s}$. So $\cD_{\ni s}$ contains an isomorphic monochromatic-blue copy of itself, by contradiction to the induction hypothesis.
	\item So $R(s)$ contains an isomorphic copy of $\cM_s$, denoted by $\cM'_s$. Let $\theta_s:\cM'_s\to \cM_s$ be such an isomorphism.
	To conclude we define $S_{n+1}:=\bigcup_{s\in S_{n}} \cM'_s$ and $\theta_{n+1}:=\bigcup_{s\in S_{\alpha}} \theta_{s}$
\end{itemize}

	If $S = \bigcup_{n<\omega} S_n$ and $\theta = \bigcup_{n<\omega} \theta_{\alpha}$, then by its construction $\theta: S\to T$ is an isomorphism of trees such that $\theta \upharpoonright \cM'_s : \cM'_s \to \cM_{\theta(s)}$ is an isomorphism of $\cL$ structures. Since $C(s) =\red$ for all $s\in S$, by definition of $C$, there is a countable dense monochromatic-red $\cD_2\subset \prod_S \cM_s$.
\end{proof}

\begin{theorem}\label{rigidInd}
	There is a countable rigid elementarily indivisible structure, in a finite language.
\end{theorem}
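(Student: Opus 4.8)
The plan is to realize the desired object as the finite-language reduct of a countable dense substructure of an infinite tree product assembled from the sequence $\{\cA_i\}_{i\in\omega}$ of \Cref{infNoniso}. These $\cA_i$ share a finite relational language $\cL$, are pairwise non-isomorphic, are each elementarily indivisible, embed elementarily into one another, and (being elementarily equivalent to the ultrahomogeneous $\cA_0$) admit quantifier elimination; I also take them transitive, as the construction of \Cref{infNoniso} permits and as \Cref{compositionQE} requires. Let $T$ be the leveled tree of height $\omega$ with $\omega$ successors at every node, fix an enumeration of each universe $|\cA_i|$ by $\omega$, and set $\cM_t:=\cA_{c(t)}$, where $c(t)$ is the position of $t$ among its parent's successors (with $c(\rot(T)):=0$), so that the successors of every node carry the pairwise distinct fibres $\cA_0,\cA_1,\cA_2,\dots$ Finally let $\cD\subset\infProds$ be the canonical dense substructure of finitely supported branches (those eventually taking the $0$-th successor). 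The non-isomorphism of the fibres will yield rigidity; their mutual elementary embeddability will drive the indivisibility argument.

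For elementary indivisibility, first note that since the $\cA_i$ admit QE and are transitive, and every formula mentions only finitely many $s_\alpha$, iterating \Cref{compositionQE} (reducing to finite height via \Cref{infToFin}) shows $\infProds$ has QE; as $\cD$ is dense, $\cD\prec\infProds$ and $\theory(\cD)$ has QE, so indivisibility and elementary indivisibility coincide for $\cD$. I then adapt the colouring argument of \Cref{infProdInd}, using the embeddings $\cA_j\prec\cA_i$ in place of the unavailable $\omega$-purity. Given $c\colon\cD\to\{\red,\blue\}$, colour a node $t$ blue iff $\cD_t:=\{a\in\cD:t\in a\}$ contains an all-blue copy of itself. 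The decisive local step is that for a red node $s$ and any target index $j$, indivisibility of $\cM_s=\cA_{c(s)}$ forces a copy of $\cA_{c(s)}$ into one colour class among $\suc(s)$, and postcomposing with $\cA_j\prec\cA_{c(s)}$ realises in that same class a copy of $\cA_j$; thus the fibre type $\cA_j$ prescribed by the pattern of $\cD$ can be installed at every node of the copy irrespective of the ambient fibre. As in \Cref{infProdInd}, an all-blue fibre copy at $s$ would assemble to an all-blue copy of $\cD_s$, forcing $s$ blue; so at a red node the red successors suffice, and recursing down $T$ produces a monochromatic subtree $S$ labelled-isomorphic to $T$. Its canonical finitely supported dense substructure is then isomorphic to $\cD$, and by QE it is an elementary substructure of $\cD$; together with the all-blue case this exhibits a monochromatic elementary copy of $\cD$.

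Passing to the $\cL$-reduct preserves elementary indivisibility: by \Cref{TMI} every reduct of the elementarily indivisible $\cD$ is indivisible, hence so is every reduct of $\cD\!\upharpoonright\!\cL$. For rigidity I argue that the finitely many $\cL$-relations already encode both the tree and its fibre types. Since $\cL$ is finite, ``$a$ bears the same $\cL$-relations to $b$ as to $c$'' is a single $\cL$-formula, and it holds exactly when $b$ and $c$ branch away from $a$ above $a\wedge b$; consequently any $\cL$-automorphism preserves the meet-structure of $T$ and acts as a tree-automorphism. Being an isomorphism it carries the fibre $\cA_{c(t)}$ at each $t$ onto an isomorphic fibre, so, the $\cA_i$ being pairwise non-isomorphic, it preserves the index $c(t)$; as $c$ separates siblings, induction from the root fixes every node, hence every branch, and $\cD\!\upharpoonright\!\cL$ is rigid. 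This produces a countable, rigid, elementarily indivisible structure in the finite language $\cL$.

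The main obstacle is the direct tension between rigidity and copying: giving siblings pairwise distinct fibres is precisely what forces rigidity, yet it destroys the $\omega$-purity on which \Cref{infProdInd} relied to complete a single-colour copy of a fibre to a copy of the whole product. The resolution, and the step demanding the most care, is to play the two halves of \Cref{infNoniso} against one another: the $\cA_i$ are non-isomorphic (securing rigidity) but mutually elementarily embeddable (securing the copy), so the embeddings $\cA_j\prec\cA_{c(s)}$ let the recursion reproduce the exact fibre-pattern of $\cD$ with no purity at hand, while QE supplies elementarity for free. The points needing the closest scrutiny are that the recursion really delivers a single monochromatic copy \emph{isomorphic} to $\cD$ (where the canonical finitely supported choice of $\cD$ is essential, so that the labelled isomorphism $S\cong T$ identifies the copy with $\cD$), and that the displayed $\cL$-formula genuinely recovers enough of the meet-structure for the reduct to stay rigid.
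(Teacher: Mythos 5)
Your overall architecture runs parallel to the paper's: both start from the family $\{\cA_i\}_{i\in\omega}$ of \Cref{infNoniso}, build a tree product over $\omega^{<\omega}$ whose fibres are pairwise non-isomorphic $\cA_i$'s to kill automorphisms, and exploit the mutual elementary embeddings $\cA_i\prec\cA_j$ to salvage (elementary) indivisibility despite the loss of purity. But there is a fatal gap at the finite-language step. You keep only the original finite language $\cL$, discard the $s_\alpha$'s, and argue that $\cD\upharpoonright\cL$ is still rigid because ``$a$ bears the same $\cL$-relations to $b$ as to $c$'' holds \emph{exactly} when $b$ and $c$ branch away from $a$ above $a\wedge b$. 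Only one direction of this biconditional is true. By \Cref{defInfProd} (or \Cref{qfTypeChar}), the atomic $\cL$-relations between branches $a$ and $b$ are read off inside the single fibre $\cM_m$, $m=a\wedge b$, from the pair $\big(S_a(m),S_b(m)\big)$. So take $b,c$ with $a\wedge b=a\wedge c=m$ but $S_b(m)\neq S_c(m)$: then $a$ bears the same $\cL$-relations to $b$ and to $c$ precisely when $S_b(m)$ and $S_c(m)$ have the same atomic type over $S_a(m)$ in $\cM_m$ --- and since $\cL$ is finite while $\cM_m$ is infinite, pigeonhole gives infinitely many such pairs $S_b(m)\neq S_c(m)$, i.e.\ pairs that satisfy your formula without branching away above the meet. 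Hence your formula does not recover the meet structure, nothing forces an automorphism of the reduct to act on $T$ at all, and rigidity of $\cD\upharpoonright\cL$ is unsupported; note that \Cref{TMI} does transfer elementary indivisibility to reducts, but says nothing about rigidity, which in general is destroyed by forgetting structure. This is exactly the problem the paper's proof is engineered around: it expands every fibre by one auxiliary binary relation $R$, interpreted as a complete graph at even heights and an empty graph at odd heights, so that each $s_i$ becomes $\emptyset$-definable from $R$ by an explicit recursion; the full structure is then \emph{inter-definable} with its reduct to the finite language $\cL\cup\{R\}$, and inter-definability preserves both rigidity and elementary indivisibility. Without some such device your construction simply does not yield a finite language.

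Two further steps lean on machinery that is unavailable in your setting. First, your claim that $\infProds$ (hence $\cD$) admits QE ``by iterating \Cref{compositionQE} through \Cref{infToFin}'' is circular: \Cref{infToFin} presents the infinite product as a finite-height product whose top fibres are again \emph{infinite} products, so the induction never terminates; the paper never asserts QE for infinite products, and instead obtains elementarity and uniqueness from purity via \Cref{BAFstep} and \Cref{denseUnique}. Second, your tree is nowhere near (almost) pure --- siblings carry non-isomorphic fibres at every level --- so \Cref{denseUnique} and \Cref{denseIso} do not apply, yet your colouring argument needs exactly their conclusions: that a dense monochromatic substructure of the copied subtree is \emph{isomorphic} to $\cD$, and that dense substructures sit elementarily. (Your ``canonical finitely supported'' choice does not fix this: the monochromatic subtree and its installed fibre copies come from indivisibility and embeddings, so its finitely supported branches need not lie in $\cD$, where the colouring lives, nor be carried to finitely supported branches of $T$.) The paper's route avoids all of this by running \Cref{infProdInd} only on the \emph{pure} tree with all fibres $\cA_0$ (suitably expanded by $R$), and then transferring elementary indivisibility to the rigid non-pure product through the two-sided elementary embeddings supplied by \Cref{allElemEmbed} and \Cref{allallElemEmbed}. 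These latter gaps are likely repairable by following that two-step route; the finite-language gap above is not repairable without introducing something like the relation $R$.
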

For the proof of \Cref{rigidInd}, we will need the following:
\begin{fact}[\text{\cite[Lemma 3.5]{Me}}]\label{elemEquivResult}
	If $\cM \sim_e \cN$  then
	$\cM$ is elementarily indivisible iff $\cN$ is elementarily indivisible.
\end{fact}
\begin{fact}[\text{\cite[Lemma 3.10]{Me}}]\label{infChain}
	There is a sequence $\{\cA_i\}_{i\in \omega}$  of pairwise-non-isomorphic countable elementarily indivisible structures, in a finite language, such that $\cA_i\prec \cA_j$ for all $i,j\in \omega$. Furthermore, $\cA_0$ can be chosen to be homogeneous.
\end{fact}

\begin{proof}[proof of \Cref{rigidInd}]
	We first give an example in an infinite language and then present a structure in a finite language that is interdefinable with the first, i.e., a structure on the same underlying set with the same $\emptyset$-definable sets.
	
	For the first example, in an infinite language: Let $\{\cA_i\}_{i\in \omega}$ be a set of pairwise-non-isomorphic countable elementarily indivisible structures, in a finite language $\cL$, such that $\cA_i\sim_e \cA_j$ for all $i,j\in \omega$ such that $\cA_0$ is homogeneous, as provided by \Cref{infChain}. 
	Let $ T=\omega^{<\omega}$. Let $\Braket{ \sigma_i | i\in \omega}$ be an enumeration of $T$. Let $\cM_a^0 := \cA_0$ for all $a\in T$ and $\cN_{\sigma_i}^0 := \cA_i$. For all $a\in T$, let $\cM_a$ and $\cN_a$ be expansions of $\cM_a^0$ and $\cN_a^0$, respectively, to a new binary relation $R$ such that $\cM_a$ and $\cN_a$ both interpret $R$ as a full subgraph whenever $\height(a)$ is even and as an empty subgraph whenever $\height(a)$ is odd.  
	Let $\cD'\subset \infProds$ be countable and dense.
	By \Cref{infProdIndOmega}, $\cD'$ is elementarily indivisible. By \Cref{infProdElemEmbed}, $\prod_T^s\cM_t \sim_e \prod_T^s\cN_t$.
	By \Cref{allallElemEmbedBoth} there is a countable dense  elementary substructure $\cD\prec \infProdNs$ such that $\cD'\sim_e\cD$. Since $\cD'$ is elementarily indivisible, so is $\cD$, by \Cref{elemEquivResult}. Now $\cD$ is rigid since if there are distinct $a, b\in \cD$ and $\sigma\in \aut(\cD)$ such that $\sigma(a)=b$, since $a\neq b$, there is some $i<\omega$ such that $\neg s_i(a,b)$ but $\sigma$ sends the $s_i$-equivalence class of $a$ to the $s_i$-equivalence class of $b$, but, by definition of $\cD$, no two $s_i$-equivalence classes are isomorphic.
	
	For an example in a finite language, we notice that $s_i$ is definable from $R$, for all $1\leq i<\omega$:
	\begin{itemize}
		\item $s_1(x,y)\leftrightarrow \bigg(\neg R(x,y) \lor  \exists z \Big(\neg R(x,z)\land \neg R(y,z)\Big)\bigg)$
		\item $s_{2n}(x,y)\leftrightarrow \Bigg( s_{2n-1}(x,y) \land \bigg(R(x,y) \lor \exists z \Big(s_{2n-1}(x,z) \land R(x,z)\land R(y,z)\Big)\bigg)\Bigg)$ for $n\geq 1$.
		\item  $s_{2n+1}(x,y)\leftrightarrow \Bigg( s_{2n}(x,y) \land \bigg(\neg R(x,y) \lor \exists z \Big(s_{2n}(x,z) \land \neg R(x,z)\land \neg R(y,z)\Big)\bigg)\Bigg)$ for $n\geq 1$.
		
	\end{itemize}
	So $\cD$ and $\cD\upharpoonright \cL\cup\{R\}$ are inter-definable, the latter being in a finite language.
\end{proof}

\subsection{$\cL_{\omega_1,\omega}$-elementary indivisibility and transitivity}\label{LWWIndivisibility}
In this \namecref{LWWIndivisibility}, we strengthen the notion of elementary indivisibility to $\cL_{\omega_1,\omega}$ and show that not only does \Cref{rigidInd} fail in this context, but in fact, every $\cL_{\omega_1,\omega}$-elementarily indivisible structure is transitive.

\begin{definition}
A relational structure is \emph{$\cL_{\omega_1,\omega}$-elementarily indivisible} if for every colouring of its universe in two colours, there is a monochromatic $\cL_{\omega_1,\omega}$-elementary substructure isomorphic to it.
\end{definition}
\begin{lemma}\label{LWW one type}
	If $\cM$ is a countable $\cL_{\omega_1,\omega}$-elementarily indivisible structure then $a\equiv_{{\omega_1,\omega}} b$ for any two singletons $a,b\in \cM$.
\end{lemma}
\begin{proof}
		If not, then there is an $\cL_{\omega_1,\omega}$-formula in one free variable $\phi(x)$ such that $\cM \models \exists x\, \phi(x)$ and $\cM\models \exists x\, \neg\phi(x)$. Let $c:M\to\{\text{red},\text{blue}\}$ be defined as
	\[ c(x):=\twopartdef{\text{blue}}{\cM\models \phi(x)}{\text{red}}{\cM\models \neg\phi(x).} \]
	Clearly, no $c$-monochromatic substructure is $\cL_{\omega_1,\omega}$-elementary.
\end{proof}
\begin{theorem}
	Every countable $\cL_{\omega_1,\omega}$-elementarily indivisible structure is transitive.
\end{theorem}
\begin{proof}
	Let $\cM$ be an $\cL_{\omega_1,\omega}$-elementarily indivisible structure and let $a,b\in \cM$ be singletons, then by \Cref{LWW one type}, $a\equiv_{{\omega_1,\omega}} b$. By Scott's Isomorphism Theorem (\cite{Scott}, \cite[Corollary 3.5.4]{Hodges93}), $\Braket{\cM,a}\cong \Braket{\cM,b}$ (where $\Braket{\cM,a}, \Braket{\cM,b}$ are expansions of $\cM$ by a constant symbol for $a,b$ respectively). Finally, any isomorphism between $\Braket{\cM,a}$ and $\Braket{\cM,b}$ is an automorphism of $\cM$ sending $a$ to $b$.
\end{proof}

\bibliographystyle{alpha}
\bibliography{infprodref}

\end{document}